\theoremstyle{plain}
\newtheorem{theorem}{Theorem}[section]
\newtheorem{lemma}[theorem]{Lemma}
\newtheorem{proposition}[theorem]{Proposition}
\theoremstyle{definition}
\newtheorem{setting}{Setting}
\newtheorem{remark}{Remark}
\newtheorem{definition}{Definition}
\newtheorem{assumption}{Assumption}
\begin{document}

\title{On the Validity of Conformal Prediction for Network Data Under Non-Uniform Sampling}
\author{Robert Lunde \\
Department of Mathematics and Statistics \\ 
Washington University in St. Louis}

\maketitle


\begin{abstract}
We study the properties of conformal prediction for network data under various sampling mechanisms that commonly arise in practice but often result in a non-representative sample of nodes.  We interpret these sampling mechanisms as selection rules applied to a superpopulation and study the validity of conformal prediction conditional on an appropriate selection event. We show that the sampled subarray is exchangeable conditional on the selection event if the selection rule satisfies a permutation invariance property and a joint exchangeability condition holds for the superpopulation.  Our result implies the finite-sample validity of conformal prediction for certain selection events related to ego networks and snowball sampling.  We also show that when data are sampled via a random walk on a graph, a variant of weighted conformal prediction yields asymptotically valid prediction sets for an independently selected node from the population.   
\end{abstract}

\section{Introduction}
The statistical analysis of network data is now a common task in many disciplines, ranging from neuroscience to sociology.  A particularly important problem in this area is regression when the covariates include network information.  Arguably the most natural approach to modeling such data involves constructing network summaries for each node and including these terms as predictors in a regression model. While many options, ranging from linear regression to deep neural networks, are available for generating predictions, statistical inference is complicated by the non-standard dependence structure of the network summary statistics.

In other settings, conformal prediction, pioneered by Vladimir Vovk and colleagues in the 1990's and further developed by \citet{https://doi.org/10.1111/rssb.12021} and \citet{pmlr-v25-vovk12}, and \citet{barber2023conformal}, among others, is a method that allows valid uncertainty quantification under almost no assumptions on the fitted model. In particular, suppose that $(Y_1, X_1), \ldots, (Y_{n+1}, X_{n+1})$ are exchangeable.  Conformal prediction allows the construction of prediction set $\widehat{C}(X_{n+1})$ for the new test point $Y_{n+1}$ such that, for any pre-specified $\alpha \in (0,1),$
\begin{align*}
P(Y_{n+1} \in \widehat{C}(X_{n+1})) \geq 1-\alpha.
\end{align*}

In a recent manuscript, \citet{network-conformal} extended the validity of conformal prediction to settings in which the fitted model incorporates network information.  Suppose that $\hat{Z}_1, \ldots, \hat{Z}_{n+1}$ are nodal network summaries computed from both a $(n+1) \times (n+1)$ connection matrix $A$ and covariates $(X_1, \ldots, X_{n+1})$. Under a joint exchangeability condition on a regression array and a mild regularity condition on the network summary statistics (See Section \ref{sec:problem-setup-notation} for details), they show that $(Y_1,X_1,\hat{Z}_1), \ldots, (Y_{n+1},X_{n+1},\hat{Z}_{n+1})$ are exchangeable; thus, conformal prediction still offers the same guarantee for a new test point $Y_{n+1}$:
\begin{align*}
P(Y_{n+1}  \in \widehat{C}(X_{n+1}, \hat{Z}_{n+1})) \geq 1-\alpha.  
\end{align*}

While the above result suggests that conformal prediction is widely applicable in network-assisted regression problems, one implicit assumption is that the observed nodes are in some sense a representative sample.  However, in some settings, such an assumption is unlikely to hold.  For instance, in certain social science applications, one is interested in learning about the properties of individuals in rare, hard-to-reach populations; see for example, \citet{faugier-hard-reach}. To sample a sufficient number of such individuals, it is common to recruit participants that are known to some other members of the subpopulation. However, such referral strategies are likely to result in a non-representative sample of the subpopulation of interest.    Moreover, in online social media settings, it is often the case that third parties do not have access to the entire dataset and have to rely on crawling strategies to learn about the network; see for instance, \citet{6027868}. Random walks on graphs are not only biased towards higher degree nodes but also exhibit dependence between observations.       

In this paper, we consider whether conformal prediction remains valid for network-assisted regression under common sampling mechanisms for such data.  Surprisingly, we show that conformal prediction is still finite-sample valid for certain selection events related to ego and snowball sampling so long as the test point is chosen at random from the (non-representative) sample.  To prove these results, we view these sampling mechanisms as selection events acting on a larger population that is jointly exchangeable.  We then argue that if the random selection rule $\mathcal{S} \subseteq \{1, \ldots ,n\}$ satisfies a certain invariance property, then the subarray with indices given by $\mathcal{S}$ is \textit{conditionally} exchangeable given $\mathcal{S} = S$, leading to a guarantee of the form:
\begin{align}
\label{eq-conditional-coverage-guarantee}
P(Y_{test} \in \widehat{C}(X_{test}, \hat{Z}_{test}) \ | \ \mathcal{S} = S) \geq 1 - \alpha.
\end{align}
where the network summary statistics $(\hat{Z}_i)_{i \in S}$ are computed on the subarray with indices in $S$.  Thus, the mild joint exchangeability assumption on the regression array imposes enough structure to imply conditional exchangeability for certain selection events, leading to finite-sample valid prediction sets.  It should be noted that statistical inference under snowball sampling is a notoriously difficult problem; domain scientists are often rightfully hesitant to draw conclusions based on such data.  However, our results suggest that for certain selection events associated with snowball sampling, one can construct finite-sample valid prediction intervals for individuals in the sample under minimal assumptions.   

We also consider the question of whether it is possible to construct valid conformal prediction sets for observations not belonging to the selected set when nodes are sampled by a random walk on the graph.  We show that under certain conditions, a weighted conformal prediction procedure \citep{NEURIPS2019_8fb21ee7} that corrects for the bias of the random walk towards higher degree nodes achieves asymptotically valid coverage.  Moreover, we show that, if the underlying graphon model is sufficiently well-behaved, one only needs to traverse $m = \omega(\log n)$ nodes.  Our proof builds on arguments used in \citet{DBLP:conf/colt/ChernozhukovWZ18} and \citet{Chernozhukove2107794118} to establish the asymptotic validity of conformal prediction for time series.  However, some care is needed to uniformly bound the dependence properties of the random walk, which depend on the (random) topology of the underlying graph.  

The rest of the paper is organized as follows.  In Section \ref{sec:problem-setup-notation}, we introduce relevant notation and background.  In Section \ref{sec:conditonal-validity} we present finite sample validity results for selection rules satisfying a permutation invariance property.  In Section \ref{sec:asymptotic-validity-random-walk}, we state asymptotic validity results for a weighted conformal prediction procedure under random walk sampling.  A simulation study and a real-data example are considered in Section \ref{sec:experiments}.

\section{Problem Setup and Notation}
\label{sec:problem-setup-notation}
\subsection{Jointly Exchangeable Array}
We start by introducing notation.  We consider a superpopulation of size $n$.  Let $Y_1,\ldots, Y_n \in \mathbb{R}$ denote the corresponding response variables and $X_1, \ldots, X_n \in \mathbb{R}^{d}$ denote associated covariates.  Moreover, let $A$ be an $n \times n$ matrix, where $A_{ij}$ carries information about the connection between $i$ and $j$. In this paper, we consider the case where $A$ is binary.  While it is common to also assume that $A$ is undirected with no self-loops, in Section \ref{sec:snowball-sampling}, we consider directed graphs. When it is more convenient, we use the equivalent graph-theoretic notation, where $G$ is defined by its vertex set and edge set.  When the graph is undirected, the edge set consists of sets containing exactly two vertices; when the graph is directed, the edge set consists of ordered pairs of vertices.   

Now for $1\leq i, j \leq n$, let $V_{ij} = (Y_i,Y_j, X_i, X_j, A_{ij})$.   In addition, let $[n] = \{1, \ldots, n \}$, $\sigma:[n] \mapsto [n]$ denote a permutation function, and $\mathcal{L}(\cdot)$ denote the law or distribution of a random variable. Moreover, let $V^\sigma = (V_{\sigma(i) \sigma(j)})_{1 \leq i,j \leq n}$.  We make the following assumption, which was previously considered by \citet{network-conformal}:
\begin{assumption}
\label{assumption-exchangeability}
The array $(V_{ij})_{1 \leq i, j \leq n}$ is jointly exchangeable; that is, for any permutation function $\sigma$,
\begin{align}
\mathcal{L}(V^\sigma ) = \mathcal{L}(V).
\end{align}
\end{assumption}

Assumption \ref{assumption-exchangeability} is very general and includes many commonly used and natural network models.  For the reader's convenience, we present two classes of such regression frameworks below, which were previously discussed in the above reference:

\begin{setting}[Independent Triplets and Graphon Model]
\label{setting-independent-triples}
 Suppose that $(Y_1, X_1,\xi_1),$ $\ldots, (Y_{n}, X_{n}, \xi_{n})$ are i.i.d.\ triplets, where $\xi_1,\ldots, \xi_{n}$ are latent positions marginally uniformly distributed on $[0,1]$, and the adjacency matrix is generated as 
  \begin{align}
 \label{eq:sparse-graphon-model}
 A_{ij} = A_{ji} =  \mathbbm{1}(\eta_{ij} \leq \rho_n w(\xi_i, \xi_j) \wedge 1 ).  
 \end{align}
Here $\{\eta_{ij} \}_{1\leq i<j \leq n}$ is another set of i.i.d.\ $\mathrm{Uniform}[0,1]$ variables independent from all other random variables, $\rho_n$ controls sparsity of the network, and $w$ is a non-negative function symmetric in its arguments which satisfies $\int_0^1 \int_0^1 w(u,v) \ du \ dv =1$.  This model was first studied in the statistics literature by \citet{Bickel-Chen-on-modularity} and is inspired by representation theorems for jointly exchangeable arrays due to \citet{aldous-representation-array} and \citet{hoover-exchangeability}. As $n$ grows, it is natural to focus on the case $\rho_n \rightarrow 0$, since  most real world graphs are sparse, in the sense that they have $o(n^2)$ edges. 
\end{setting}

Note that the (sparse) graphon model subsumes many other commonly used classes of network models, including stochastic block models \citep{holland-sbm}, (generalized) random dot product graphs \citep{young-schneiderman-rdpg, 10.1111/rssb.12509}, and latent space models \citep{hoff-raftery-handcock-latent-space-model}.

Alternatively, one could consider the following setting where the response depends explicitly on neighborhood averages.  

\begin{setting}[Regression with Neighborhood and Node Effects]
\label{setting-neighborandnode}
Suppose that $(X_1, \xi_1,\epsilon_1) ,\ldots,$ $(X_{n}, \xi_{n},\epsilon_{n})$ are exchangeable and that $A$ is generated by the sparse graphon model  \eqref{eq:sparse-graphon-model}.   Let $\alpha_{ij}^{(k)}$ be a binary random variable equal to 1 if the shortest path from node $i$ to $j$ is of length $k$, and 0 otherwise.  Let 
\begin{align*}
\widetilde{D}_i^{(k)} = \sum_{j \neq i} \alpha_{ij}^{(k)}, \ \ 
\widetilde{X}_i^{(k)} = \frac{1}{  \widetilde{D}_i^{(k)}} \sum_{j \neq i} \alpha_{ij}^{(k)} X_j.
\end{align*}
Furthermore, let $\beta_{ij}$ is a weight function depending only on the length of the shortest path between nodes $i$ and $j$, and define the neighbor-weighted response:
\begin{align*}
\widetilde{Y}_i = \frac{1}{\sum_{j \neq i} \beta_{ij}} \sum_{j \neq i} \beta_{ij} Y_j.
\end{align*}
Now, suppose that $Y_i$ may be represented as: 
\begin{align*}
Y_i = f(X_i, \xi_i, \widetilde{Y}_i, \widetilde{D}_i^{(1)}, \ldots, \widetilde{D}_i^{(2n)},  \widetilde{X}_i^{(1)}, \ldots, \widetilde{X}_i^{(n)},\epsilon_i), 
\end{align*}
where $f$ is measurable function such that a unique solution exists almost surely.
This model is a nonparametric generalization of spatial autoregressive models for networks studied in, for example, \cite{10.2307/2298123}. When $\widetilde{Y}_i$ is included in the model, the value of $Y_i$ is determined endogenously.
\end{setting}

In this work, we often view a sampling mechanism as some selection rule applied to the indices of some population graph $V$.  Let $\mathcal{S}(V) \subseteq \{1,\ldots, n\}$ be a selection function.  For various choices of $\mathcal{S}$, we will be in interested in the properties of conformal prediction for the following array with a random index set:   
\begin{align*}
\mathcal{V} = (V_{ij})_{i,j \in \mathcal{S}}.
\end{align*}
If $\mathcal{S}$ is not in some sense drawn uniformly from $\{1, \ldots, n\}$, it will typically be the case that the random subarray $\mathcal{V}$ will not inherit exchangeability from $V$.  However, for certain selection events, the array is \textit{conditionally exchangeable} given $\mathcal{S}$, leading to valid conformal prediction for points within a potentially non-representative sample of nodes.  This idea is explored in Section \ref{sec:conditonal-validity}.

\subsection{Network Covariates}

In both Settings \ref{setting-independent-triples} and \ref{setting-neighborandnode}, since the latent positions $\xi_1, \ldots, \xi_{n}$ are unobservable, it is instead common to construct network summary statistics.   We fit a regression model to triplets of the form $(Y_i, X_i, \hat{Z}_i)$, 
where $\hat{Z}_i$ are local network statistics corresponding to node $i$; for concreteness, let $\hat{Z}_i \in \mathbb{R}^p$.  In this paper, when network covariates are included, we will assume that the network covariates take as input the subarray $\mathcal{U}_{\mathcal{S}(\mathcal{V})} = ( X_i, X_j, A_{ij})_{i,j \in \mathcal{S}(\mathcal{V})}$, where $\mathcal{S}(\mathcal{V})$ is random.  In what follows, we impose a symmetry condition for $\mathcal{U}_{S}$ for each possible choice of $S \subseteq \{1, \ldots, n\}$, where $S$ is treated as fixed.  To this end, consider the function $\zeta^{(k)}(\cdot)$, which takes as input a $k \times k$ subarray and constructs $k$ network statistics. We make the following assumption: 

\begin{assumption}
\label{assumption-permutation-invariance}
Suppose that for any $S \subseteq \{1,\ldots, n\}$, the corresponding network covariates $(\hat{Z}_1, \ldots, \hat{Z}_{k})$ satisfy:
\begin{align*}
(\hat{Z}_{\sigma(1)}, \ldots, \hat{Z}_{\sigma(k)}) = \zeta^{(k)}(\mathcal{U}_S^\sigma) \ \ a.s.
\end{align*}
for any permutation $\sigma:[1,\ldots, k] \mapsto [1,\ldots,k]$, where $|S| = k$.  
\end{assumption}

Assumption \ref{assumption-permutation-invariance} is very mild; essentially, we require that the values of the network summary statistics do not change when the regression array is re-labeled.  See \citet{network-conformal} for further discussion.  This condition is often satisfied for graph neural networks as well; see  \citet{Zargarbashi-gnn} and \citet{huang2023uncertainty}.  It should be noted that these works assume that the graph is fixed and the rest of the data are exchangeable.  One implication of our work is that it sheds more light on when such an assumption may be reasonable if one conditions on the graph after selection.   

\subsection{Split Conformal Prediction}
In this work, we consider split conformal prediction, which is a variant of conformal prediction that is often more computationally tractable than the originally proposed version of the method.  Let $S[i]$ denote the $i$th element of $S$ arranged in increasing order. Suppose that $m=|S| \geq 3$ and let $S \setminus S[m] = D_1 \cup D_2 $ be a partition of $S \setminus S[m]$ with at least one element in each set.  Without loss of generality, we treat $S[m]$ as the test point. Furthermore, let $\mathcal{D}_1  =(Y_i,X_i, \hat{Z}_i)_{i \in D_1}$ and $\mathcal{D}_2  =(Y_i,X_i, \hat{Z}_i)_{i \in D_2}$.  Define the non-conformity score $s(y,x,z; \mathcal{D}_1): (\mathbb{R} \times \mathbb{R}^d \times \mathbb{R}^p) \mapsto \mathbb{R}^+$, which measures how unusual a given point $(y,x,z)$ is. For split conformal prediction, the non-conformity score typically involves some function estimated on $\mathcal{D}_1$. For instance, a common choice is $|y - \hat{\mu}_n(x,z)|,$ where $\hat{\mu}_n(\cdot)$ is a regression function estimated on $\mathcal{D}_1.$  The procedure that we consider in Section \ref{sec:conditonal-validity} is given in Algorithm 1.     

\begin{algorithm}
\caption{Split Conformal Prediction}
\begin{flushleft} 
\hspace*{\algorithmicindent} \textbf{Input:} Data $(Y_i,X_i, \hat{Z}_i)_{i \in S\setminus S[m]}$, level $\alpha$, test point $(X_{test}, \hat{Z}_{test})$ \\ 
\hspace*{\algorithmicindent}
\textbf{Output:} Prediction Set $\widehat{C}(X_{test}, \hat{Z}_{test})$
\end{flushleft}
\begin{algorithmic}[1]
\State Split into two folds $\mathcal{D}_1 = (Y_i,X_i,\hat{Z}_i )_{i \in D_1}$, $\mathcal{D}_2 = (Y_i,X_i,\hat{Z}_i)_{i \in D_2}$.
\For{ $i \in D_2$ }
\State  $S_{i} \leftarrow s(Y_i,X_i, \hat{Z}_i \ ; \ \mathcal{D}_1)$
\EndFor
\State $d \leftarrow  (1-\alpha)(1+\frac{1}{|D_2|}) $ empirical quantile of $(S_i)_{i \in D_2}$
\State $\widehat{C}(X_{test}, \hat{Z}_{test}) \leftarrow \{y: s(y, X_{test}, \hat{Z}_{test}) \leq d \}$
\end{algorithmic}
\end{algorithm}

\section{Conditional Exchangeability via Invariant Selectors}
\label{sec:conditonal-validity}
In this section, we adopt a selective inference perspective (see for example, \citet{fithian2017optimal}) to establish the validity of conformal prediction for certain selection rules.  For $S \subseteq \{1, \ldots, n\}$, consider the subgroup of permutations:
\begin{align}
   \Sigma_S = \left\{ \sigma: [n] \mapsto [n] \ \bigr\rvert \ \ \sigma(i) \in S \ \ \forall \  i \in S, \  \sigma(i) = i \ \ \forall \  i \not \in S  \right\}.
\end{align}
We introduce the following notion:

\begin{definition}[Invariant Selector]
The selection rule $\mathcal{S}(V)$ is an invariant selector if for each $S \subseteq \{1, \ldots, n\}$ such that $P(\mathcal{S} = S) >0$,
\begin{align*}
\{ \mathcal{S}(V^{\sigma}) = S\} = \{ \mathcal{S}(V) = S\} \ \  \forall \ \sigma \in \Sigma_S. 
\end{align*}
\end{definition}
The notion of an invariant selector is different from permutation invariance condition stated in Assumption \ref{assumption-permutation-invariance}.  We require that the indices of the selected nodes remain fixed when a subgroup of permutations is applied to $V$ and that the selection event itself does not change. Some of the most natural selection events are related to the presence or absence of edges.  In these cases, one can treat the selector as purely a function of $A$.  Ego sampling, considered in Section \ref{sec:ego}, is an illustrative example, but the idea is more general.

\begin{remark}
One may consider a slightly weaker notion of invariance.  The proof of Theorem \ref{theorem-conditional-exchangeability} still goes through under the assumption that the symmetric set difference, which we denote $\ominus$, has measure zero; that is, we can require that an invariant selector $\mathcal{S}(V)$ satisfies:
\begin{align*}
P(\{ \mathcal{S}(V^{\sigma}) = S\} \ominus \{ \mathcal{S}(V) = S\}) = 0 \ \ \forall \ \sigma \in \Sigma_S
\end{align*}
for all $S$ such that $P(\mathcal{S}(V) = S) >0.$ 

\end{remark}
The next result establishes that jointly exchangeable arrays are also conditionally exchangeable if one conditions on an invariant selector.  

\begin{theorem}[Conditional Exchangeability via Invariant Selectors]
\label{theorem-conditional-exchangeability}
Suppose that $(V_{ij})_{i,j \in \{1, \ldots, n\}}$ satisfies Assumption \ref{assumption-exchangeability} and that $\mathcal{S}(V)$ is an invariant selector. Then, $\mathcal{V}$ is jointly exchangeable conditional on $\mathcal{S}$; that is, for each $S$ such that $P(\mathcal{S}(V) = S) >0$:
\begin{align*}
\mathcal{L}( \mathcal{V}^{\sigma} \ | \ \mathcal{S} = S)  = \mathcal{L}( \mathcal{V} \ | \ \mathcal{S} = S) \ \  \forall \ \sigma \in \Sigma_S.
\end{align*}
\end{theorem}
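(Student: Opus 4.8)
The plan is to reduce the claim to a single identity between unconditional probabilities and then to move the permutation $\sigma$ from the \emph{post-selection} subarray to the \emph{full pre-selection} array, where Assumption~\ref{assumption-exchangeability} applies directly. Fix $S$ with $P(\mathcal{S}(V)=S)>0$ and fix an arbitrary $\sigma\in\Sigma_S$. It suffices to prove that for every measurable set $B$ in the relevant array space,
\begin{align*}
P\big(\{\mathcal{V}^\sigma\in B\}\cap\{\mathcal{S}(V)=S\}\big) = P\big(\{\mathcal{V}\in B\}\cap\{\mathcal{S}(V)=S\}\big);
\end{align*}
dividing by $P(\mathcal{S}(V)=S)>0$ then yields $\mathcal{L}(\mathcal{V}^\sigma\mid\mathcal{S}=S)=\mathcal{L}(\mathcal{V}\mid\mathcal{S}=S)$, and since $\sigma\in\Sigma_S$ was arbitrary the theorem follows.

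The first step is to rewrite both sides on the common event $\{\mathcal{S}(V)=S\}$ in a way that makes the dependence on $V$ explicit. Let $\Pi_S$ denote the measurable map sending an $n\times n$ array $W$ to its $S\times S$ subarray $(W_{ij})_{i,j\in S}$. Because $\sigma\in\Sigma_S$ fixes $[n]\setminus S$ pointwise and restricts to a bijection of $S$, on the event $\{\mathcal{S}(V)=S\}$ we have $\mathcal{V}=\Pi_S(V)$ and $\mathcal{V}^\sigma=(V_{\sigma(i)\sigma(j)})_{i,j\in S}=\Pi_S(V^\sigma)$, both taking values in the fixed space $(\mathbb{R}\times\mathbb{R}\times\mathbb{R}^d\times\mathbb{R}^d\times\{0,1\})^{S\times S}$. (Conditioning on $\{\mathcal{S}=S\}$ is exactly what fixes the dimension $|S|$ of these objects, sidestepping the usual awkwardness of an array of random size.)

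The crucial step invokes the invariant-selector hypothesis to relabel the selection event. Since $\sigma\in\Sigma_S$, we have $\{\mathcal{S}(V^\sigma)=S\}=\{\mathcal{S}(V)=S\}$, so
\begin{align*}
\{\mathcal{V}^\sigma\in B\}\cap\{\mathcal{S}(V)=S\} = \{\Pi_S(V^\sigma)\in B\}\cap\{\mathcal{S}(V^\sigma)=S\}.
\end{align*}
Defining the bounded measurable functional $h(W):=\mathbbm{1}\{\Pi_S(W)\in B\}\,\mathbbm{1}\{\mathcal{S}(W)=S\}$ on $n\times n$ arrays, the right-hand side equals $E[h(V^\sigma)]$. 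Now apply Assumption~\ref{assumption-exchangeability}: $V^\sigma$ and $V$ have the same law, hence $E[h(V^\sigma)]=E[h(V)]=P\big(\{\Pi_S(V)\in B\}\cap\{\mathcal{S}(V)=S\}\big)=P\big(\{\mathcal{V}\in B\}\cap\{\mathcal{S}(V)=S\}\big)$, where the last equality is the first-step identity $\mathcal{V}=\Pi_S(V)$ on $\{\mathcal{S}(V)=S\}$. This is precisely the desired identity.

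I expect the only real content — and the only place the hypotheses are genuinely used — to be this relabeling step: expressing $\mathcal{V}^\sigma$ as the $S$-subarray of $V^\sigma$ and then using the invariant-selector property to replace the fixed selection event $\{\mathcal{S}(V)=S\}$ by $\{\mathcal{S}(V^\sigma)=S\}$, after which $h(V^\sigma)$ and $h(V)$ have equal expectation purely by joint exchangeability. Everything else is routine measurability and conditioning bookkeeping. Under the weaker version in the Remark, where $\{\mathcal{S}(V^\sigma)=S\}$ and $\{\mathcal{S}(V)=S\}$ agree only up to a $P$-null set, the displayed event identity holds up to that null set, the expectations of $h$ are therefore still equal, and the argument goes through verbatim.
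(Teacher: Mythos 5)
Your proposal is correct and follows essentially the same route as the paper's proof: rewrite the conditional probability as an unconditional one, use the invariant-selector identity to replace $\{\mathcal{S}(V)=S\}$ by $\{\mathcal{S}(V^\sigma)=S\}$, and then apply joint exchangeability of $V$ to equate the two probabilities. The only cosmetic difference is that you work with an arbitrary measurable set $B$ via the indicator functional $h$, whereas the paper restricts to hyperrectangles and argues via the CDF; the substance of the argument is identical.
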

While the proof of the above result is elementary, the theorem has substantial implications for conformal prediction with network data.  If the test point is randomly chosen after selection by an invariant selector, then conformal prediction is still finite-sample valid. We formally state this result below.  As previously discussed, for ease of exposition, we assume that the test index is the largest one belonging to $S$.  To the best of our knowledge, this is the first time an invariant set has been exploited in the context of conformal prediction.  
\begin{theorem}[Validity of Conformal Prediction after Selection By an Invariant Selector]
\label{theorem-conditional-conformal-theorem}
Suppose that Assumptions \ref{assumption-exchangeability} and \ref{assumption-permutation-invariance} hold and that the selection rule $\mathcal{S}(V)$ is an invariant selector.  Then, $s(Y_i, X_i, \hat{Z}_i ; \mathcal{D}_1 )_{i \in \mathcal{D}_2}$ is conditionally exchangeable given $\mathcal{S}$. 
Moreover, for each $S$ such that $P(\mathcal{S} = S) > 0:$        
\begin{align*}
P(Y_{test} \in \widehat{C}(X_{test}, \hat{Z}_{test}) \ | \ \mathcal{S} = S) \geq 1 - \alpha.
\end{align*}
\end{theorem}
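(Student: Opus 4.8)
The plan is to reduce the statement to the textbook split-conformal coverage argument, the only genuinely new ingredient being that exchangeability holds only \emph{conditionally} on $\{\mathcal{S} = S\}$. Fix $S$ with $P(\mathcal{S} = S) > 0$ and set $k = |S|$. First I would invoke Theorem \ref{theorem-conditional-exchangeability} to obtain $\mathcal{L}(\mathcal{V}^\sigma \mid \mathcal{S} = S) = \mathcal{L}(\mathcal{V} \mid \mathcal{S} = S)$ for every $\sigma \in \Sigma_S$. Since the selected covariate subarray $\mathcal{U}_S = (X_i, X_j, A_{ij})_{i,j \in S}$ is a deterministic function of $\mathcal{V}$, it inherits this conditional invariance, and by Assumption \ref{assumption-permutation-invariance} the map $\zeta^{(k)}$ that produces $(\hat{Z}_i)_{i \in S}$ is equivariant under relabelings of $S$. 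Chaining these two facts, the array of triplets $(Y_i, X_i, \hat{Z}_i)_{i \in S}$ is jointly exchangeable conditional on $\{\mathcal{S} = S\}$ under every $\sigma \in \Sigma_S$; equivalently, listed in the order $S[1] < \cdots < S[k]$, it is an exchangeable tuple conditional on $\{\mathcal{S}=S\}$.

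\textbf{Step 2 (conditional exchangeability of the scores).} Because $S$ is fixed, the split $D_1 \cup D_2 = S \setminus S[m]$ and the test index $S[m]$ are non-random. I would then restrict attention to the permutations $\sigma \in \Sigma_S$ that fix every element of $D_1$ and permute $D_2 \cup \{S[m]\}$ arbitrarily. For such $\sigma$, the equivariance used in Step 1 shows that the fitting fold $\mathcal{D}_1 = (Y_i, X_i, \hat{Z}_i)_{i \in D_1}$ is left literally unchanged, so the estimated non-conformity score function $s(\cdot,\cdot,\cdot\,;\mathcal{D}_1)$ is the same random function before and after relabeling. Applying the conditional exchangeability from Step 1 to these $\sigma$ therefore yields that $\big(s(Y_i, X_i, \hat{Z}_i\,;\mathcal{D}_1)\big)_{i \in D_2 \cup \{S[m]\}}$ is exchangeable conditional on $\{\mathcal{S} = S\}$.

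\textbf{Step 3 (rank argument).} Finally, conditional on $\{\mathcal{S} = S\}$, exchangeability of these $|D_2|+1$ scores gives the usual fact that the rank of $S_{test} := s(Y_{test}, X_{test}, \hat{Z}_{test}\,;\mathcal{D}_1)$ among them is stochastically no smaller than a uniform draw on $\{1,\ldots,|D_2|+1\}$ (and equal in law to it absent ties). Taking $d$ to be the $\lceil (1-\alpha)(|D_2|+1)\rceil$-th order statistic of the calibration scores, which is exactly the $(1-\alpha)(1+1/|D_2|)$ empirical quantile used in Algorithm 1, gives $P(S_{test} \le d \mid \mathcal{S} = S) \ge 1-\alpha$. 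Since $\widehat{C}(X_{test}, \hat{Z}_{test}) = \{y : s(y, X_{test}, \hat{Z}_{test}) \le d\}$, the event $\{S_{test} \le d\}$ coincides with $\{Y_{test} \in \widehat{C}(X_{test}, \hat{Z}_{test})\}$, and the claimed bound follows.

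\textbf{Anticipated obstacle.} The delicate part is the bookkeeping in Steps 1--2 rather than anything analytic: $\hat{Z}_i$ is not a local function of node $i$ but a single coordinate of a map applied to the whole selected subarray $\mathcal{U}_S$, so one must check carefully that permutations in $\Sigma_S$ act on $(\hat{Z}_i)_{i\in S}$ precisely as Assumption \ref{assumption-permutation-invariance} prescribes, and that restricting to permutations fixing $D_1$ pointwise genuinely leaves $\mathcal{D}_1$ — and hence the fitted non-conformity score function — invariant. Once the scores over $D_2 \cup \{S[m]\}$ are known to be conditionally exchangeable, the remainder is the standard quantile computation and involves nothing network-specific.
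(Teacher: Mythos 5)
Your proposal is correct and follows essentially the same route as the paper: conditional exchangeability of the selected subarray (Theorem \ref{theorem-conditional-exchangeability}), transfer to the triplets $(Y_i,X_i,\hat{Z}_i)_{i\in S}$ via Assumption \ref{assumption-permutation-invariance}, restriction to permutations leaving the fitting fold fixed, and the standard exchangeable-quantile argument. If anything, your Step 2 is slightly more explicit than the paper's, since you permute $D_2\cup\{S[m]\}$ jointly (which is what the coverage bound actually requires), whereas the paper states exchangeability of the scores over $D_2$ and defers the inclusion of the test point to the cited quantile lemma.
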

One surprising aspect of our result above is that the network summary statistics may be computed on the subarray with indices in $S$.  These statistics may not even asymptotically behave like their counterparts computed on the entire graph; all that is required is the symmetry condition stated in Assumption \ref{assumption-permutation-invariance}.        In the following subsections, we will show that common sampling mechanisms may be associated with an invariant selector, thus yielding finite-sample validity of conformal prediction in these cases.  Our conditional exchangeability results may also be of independent interest.

\subsection{Ego Networks}
\label{sec:ego}
An ego network corresponds to a subgraph induced by vertices adjacent to a fixed ego node.  In certain social science applications, ego networks can be of fundamental interest since one may be interested in the personal network of a given person rather than the structure of the entire network; see \citet{perry_pescosolido_borgatti_2018} for an overview of egocentric network analysis.  While these networks are of substantial interest in their own right, ego sampling may also have certain advantages over uniform vertex sampling for prediction tasks. It is possible that the non-conformity scores are less variable within an ego network due to homophily, which may lead to tighter prediction sets.  Moreover, it is likely that ego networks are more densely connected than the entire network, leading to more informative network summary statistics.   
        
We now discuss a selection rule associated with an ego network rooted at node $i$. For any set $S$ that does not include the root $i$, the selection rule associated with the ego network for node $i$ is given by the equivalence:
\begin{align}
\mathcal{S}_{\mathcal{N}(i)}(A) = S \iff  A_{ij} = 1 \ \  \forall \ j \in S, \ A_{ij} = 0 \ \  \forall \ j \not\in S.
\end{align}
For $S$ such that $i \in S$, we set $P(\mathcal{V}_{\mathcal{N}(i)} = S) = 0$.
We have the following proposition:
\begin{proposition}
The selection rule $\mathcal{S}_{\mathcal{N}(i)}$ is an invariant selector.   
\end{proposition}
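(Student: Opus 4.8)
The plan is to verify the defining property of an invariant selector by a direct computation, treating $\mathcal{S}_{\mathcal{N}(i)}$ as a function of the adjacency array alone (as the paper notes is legitimate for edge-based selectors). First I would fix a set $S \subseteq \{1,\ldots,n\}$ with $P(\mathcal{S}_{\mathcal{N}(i)} = S) > 0$. By construction of the rule, the outcome $\mathcal{S}_{\mathcal{N}(i)} = S$ is assigned probability zero whenever $i \in S$, so we may assume $i \notin S$; consequently every $\sigma \in \Sigma_S$ satisfies $\sigma(i) = i$, and $\sigma$ restricts to a bijection of $S$ while fixing the complement $\{1,\ldots,n\}\setminus S$ pointwise.

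The key step is to track which adjacency entries the selector inspects after the relabelling $V \mapsto V^\sigma$. The $(i,j)$ adjacency entry of $V^\sigma$ equals $A_{\sigma(i)\sigma(j)} = A_{i\sigma(j)}$. I would then split over $j$: as $j$ ranges over $S$, $\sigma(j)$ also ranges over all of $S$, so the requirement ``$A_{i\sigma(j)} = 1$ for all $j \in S$'' is literally the same collection of constraints as ``$A_{ik} = 1$ for all $k \in S$''; and for $j \notin S$ we have $\sigma(j) = j$, so ``$A_{i\sigma(j)} = 0$ for all $j \notin S$'' is identical to ``$A_{ij} = 0$ for all $j \notin S$''. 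Intersecting, the event $\{\mathcal{S}_{\mathcal{N}(i)}(V^\sigma) = S\}$ and the event $\{\mathcal{S}_{\mathcal{N}(i)}(V) = S\}$ are defined by the exact same pointwise constraints on $A$, hence are equal --- not merely equal up to a null set. Since $S$ was arbitrary among positive-probability values, this establishes that $\mathcal{S}_{\mathcal{N}(i)}$ is an invariant selector, and then Theorem~\ref{theorem-conditional-exchangeability} (and in turn Theorem~\ref{theorem-conditional-conformal-theorem}) applies.

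I do not anticipate a genuine obstacle: the argument is pure bookkeeping with permutations. The only two points that need a moment's care are (i) that one may restrict attention to $S$ with $i \notin S$, because the remaining configurations carry no mass, and (ii) that $\sigma(i) = i$ for $\sigma \in \Sigma_S$ precisely because $i \notin S$ --- this is exactly what lets the ego node's neighborhood be ``seen'' consistently before and after the permutation. As a byproduct, the computation yields the stronger pointwise equality of the two events, so the weaker measure-zero variant in the preceding remark is automatically satisfied as well.
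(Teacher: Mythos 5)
Your proof is correct and follows essentially the same route as the paper's: you verify the invariance directly by noting that $A^\sigma_{ij} = A_{i\sigma(j)}$ (using $\sigma(i)=i$ since $i \notin S$), so the defining constraints for the selection event are merely permuted among themselves over $S$ and left untouched off $S$, giving exact equality of the two events. The only difference is that you spell out the bookkeeping (and the exclusion of sets containing the ego) which the paper leaves implicit.
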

To see that this is case, observe that, for $S$ such that $i \not\in S$, 
\begin{align*}
\{ \mathcal{S}_{\mathcal{N}(i)}(A^\sigma) = S    \} &= \{ A_{ij}^\sigma = 1 \ \  \forall \ j \in S, \ A_{ij}^\sigma = 0 \ \ \forall \ j \not\in S \}
\\ &=   \{ \mathcal{S}_{\mathcal{N}(i)}(A) = S \}.
\end{align*}

Note that the permutation invariance property would not hold for all $\sigma \in \Sigma_S$ if the root node $i$ is included in the ego network.  Intuitively, the network $\mathcal{V}$ associated with $\mathcal{S}_{\mathcal{N}(i)}$ is conditionally exchangeable since labels within the ego network are still uninformative even if the ego network is not representative of the population.

\subsection{Snowball Sampling}
\label{sec:snowball-sampling}
We now consider snowball sampling, which may be considered a generalization of ego sampling.  For a historical account of snowball sampling, see \citet{handcock-snowball}. In snowball sampling, one starts with a set of $m_0$ seed nodes, who then each recruit new participants from their neighbors.  The number of nominated participants is typically either some fixed number $r$ or all neighboring nodes.  In the next wave, the newly recruited subjects nominate new participants and this process is iterated until some stopping criteria is met.  In our setting, the stopping criteria will be immaterial since we consider a hypothetical referral matrix that contains referrals if a subject were to participate in the study, but we only condition on events that we have observed. Since referrals are potentially asymmetric, it will be more natural to consider directed graphs, which still fall under the framework of joint exchangebility; see for example, \citet{diaconis-janson-exchangeable-graph}.  

It should be noted that implicit in the assumption of such a referral matrix is that referrals are not directly influenced by people who recruited the subject.  However, our setup is still more general than many others in literature.  Respondent driven sampling (RDS), introduced by \citet{10.2307/3096941}, is closely related to snowball sampling and attempts to construct asymptotically unbiased estimators for population quantities by modeling inclusion probabilities.  However, as noted in \citet{10.1214/18-AOS1700}, it is often assumed that referrals are chosen uniformly from the set of neighbors, which we will not need to assume. In our setup, one may include both the referral matrix and adjacency matrix in the array $V$ and not specify the relationship between them.  Moreover, while our inference goal is different, our theory implies finite-sample validity for conformal prediction sets under mild conditions opposed to the asymptotic guarantees for these estimators.         

In the case of fixed referrals, we require that, for some $r \geq 1$:
\begin{align}
\sum_{j=1}^n A_{ij}= r \ \  \forall \ i \in \{1, \ldots, n \}.
\end{align}
While the Aldous-Hoover theorem suggests that such restrictions are not possible for jointly exchangeable infinite arrays, there are jointly exchangeable finite arrays that satisfy this property.  As an  example, we construct one very natural data generating process below that is closely tied to the graphon setup. Suppose that  $\xi_1, \ldots, \xi_n \sim \mathrm{Uniform}[0,1]$, $w: [0,1]^2 \mapsto \mathbb{R}$,  and let $W_{ij} = g( \eta_{ij}, w(\xi_i, \xi_j))$ be continuously distributed edge weights, where $\eta_{ij} \sim \mathrm{Uniform}[0,1]$ are generated independently of other random variables.  

Suppose that $A_{ii} =0$ and for $i \neq j$,
\begin{align}
\label{eq:fixed_degree_exchangeable}
A_{ij} = \begin{cases}
1 & \text{rank}(W_{ij}, \{W_{i1}, \ldots, W_{in}\} ) \leq r 
\\ 0 & \text{otherwise},
\end{cases}
\end{align}
where  $\mathrm{rank}(\cdot, \cdot)$ is the position of an element in a list arranged in descending order and $W_{ii}$ is excluded from the computation of the rank.  For completeness, we prove the following proposition in the Appendix:
\begin{proposition}
\label{proposition-referral-exchangeable}
The matrix $A$ defined in (\ref{eq:fixed_degree_exchangeable}) is jointly exchangeable.     
\end{proposition}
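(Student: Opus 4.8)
The plan is to realize $A$ as the image of a jointly exchangeable array of edge weights under a deterministic, row-wise ranking operation that is equivariant under simultaneous permutation of rows and columns; joint exchangeability of $A$ then follows from that of the weights.

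First I would introduce the auxiliary array $\widetilde{V}_{ij} = (\xi_i, \xi_j, \eta_{ij})$ for $i\neq j$ (with $\widetilde{V}_{ii}$ set to a deterministic function of $\xi_i$) and check it is jointly exchangeable: since the $\xi_i$ are i.i.d., the $\eta_{ij}$ over ordered pairs are i.i.d., the two families are independent, and $(i,j)\mapsto(\sigma(i),\sigma(j))$ is a bijection of $\{(i,j):i\neq j\}$, the pair $\big((\xi_{\sigma(i)})_i,(\eta_{\sigma(i)\sigma(j)})_{i\neq j}\big)$ has the same joint law as $\big((\xi_i)_i,(\eta_{ij})_{i\neq j}\big)$, so applying the same coordinatewise map gives $(\widetilde V_{\sigma(i)\sigma(j)})_{ij}\stackrel{d}{=}(\widetilde V_{ij})_{ij}$. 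Because $W_{ij}=g(\eta_{ij},w(\xi_i,\xi_j))$ is a fixed measurable function of $\widetilde V_{ij}$ applied entrywise, the weight array $W$ inherits joint exchangeability: $(W_{\sigma(i)\sigma(j)})_{ij}\stackrel{d}{=}(W_{ij})_{ij}$.

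Next I would define, for any real matrix $M$ whose rows have distinct entries, the ranking map $\Psi(M)$ by $\Psi(M)_{ii}=0$ and, for $i\neq j$, $\Psi(M)_{ij}=\mathbbm{1}\{\mathrm{rank}(M_{ij},\{M_{ik}:k\neq i\})\leq r\}$ with rank taken in descending order. The key observation is the equivariance identity $\Psi(M^\sigma)=\Psi(M)^\sigma$, where $M^\sigma_{ij}:=M_{\sigma(i)\sigma(j)}$; this holds because $\sigma$ is a bijection, so $\{M_{\sigma(i)k}:k\neq\sigma(i)\}=\{M^\sigma_{il}:l\neq i\}$, and hence the rank of $M^\sigma_{ij}$ within row $i$ of $M^\sigma$ equals the rank of $M_{\sigma(i)\sigma(j)}$ within row $\sigma(i)$ of $M$, which is exactly the entry $\Psi(M)^\sigma_{ij}$ records. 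Under the assumption that the $W_{ij}$ are continuously distributed, the entries within each row of $W$ are a.s.\ distinct (conditionally on the $\xi$'s the row entries are independent with atomless laws), so $A=\Psi(W)$ almost surely.

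Combining these facts, $A^\sigma=\Psi(W)^\sigma=\Psi(W^\sigma)$ a.s., and since $W^\sigma\stackrel{d}{=}W$ we conclude $A^\sigma=\Psi(W^\sigma)\stackrel{d}{=}\Psi(W)=A$ for every permutation $\sigma$, i.e.\ $A$ is jointly exchangeable. I expect the only point requiring genuine care, as opposed to routine symmetry bookkeeping, to be the a.s.\ absence of within-row ties used to identify $A=\Psi(W)$; this is precisely where the continuity of the edge weights is needed. Everything else — exchangeability of $\widetilde V$ and equivariance of $\Psi$ — is straightforward.
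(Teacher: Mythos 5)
Your proposal is correct and follows essentially the same route as the paper: establish joint exchangeability of the weight array $W$, observe that the row-wise ranking map is equivariant under simultaneous row/column permutation, and conclude that $A$ inherits exchangeability (the paper formalizes this last step via its Proposition \ref{theorem-exchangeability-transformations}, while you argue the chain $A^\sigma = \Psi(W^\sigma) \stackrel{d}{=} \Psi(W) = A$ directly). Your extra attention to a.s.\ absence of within-row ties is a detail the paper leaves implicit in the continuity assumption, not a difference in approach.
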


For snowball sampling, one of the most natural classes of sets to consider are waves.  In words, the $k$th wave consists of all people who were nominated for the first time by people in the the $(k-1)$th wave. We now define this set mathematically. Let $M_0$ denote the set index of the seed nodes.  We define waves iteratively.  Let $M_0 \subseteq \{1,\ldots, n\}$ denote the indices of the seed nodes. The zeroth wave is $M_0$ and the first wave is given by the set:
\begin{align*}
\mathcal{W}(M_0,1) &= \left\{ j |  A_{ij} = 1 \text{ for some } i \in M_0  \right\} \setminus M_0
\\ &= \bigcup_{i \in M_0} \{ j \ | \  A_{ij} = 1\} \setminus M_0.
\end{align*}
The $k$th wave is defined iteratively as:
\begin{align*}
\mathcal{W}(M_0,k) &= \bigcup_{i \in \mathcal{W}(M_0, k-1)}\{ j \ | \  A_{ij} = 1\} \biggr\backslash \left(\bigcup_{0 \leq t \leq k-1} \mathcal{W}(M_0,t) \right).
\end{align*}
The selection rule associated with the $k$th wave of snowball sampling is given by the equivalence:
\begin{align}
\mathcal{S}_{ \mathcal{W}(M_0,k)} = S \iff  \mathcal{W}(M_0,k) = S.
\end{align}
The following statement is proven in the Appendix:
\begin{proposition}
\label{proposition-snowball-invariant}
The selection rule $\mathcal{S}_{ \mathcal{W}(M_0,k)}$ is an invariant selector. 
\end{proposition}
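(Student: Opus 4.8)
The plan is to show that, for every realizable $k$-th wave $S$ (i.e.\ $P(\mathcal{S}_{\mathcal{W}(M_0,k)} = S) > 0$), the deterministic map $A \mapsto \mathcal{W}(M_0,k)(A)$ is equivariant under each $\sigma \in \Sigma_S$, and then read off the invariant-selector identity directly. The case $k = 0$ is immediate since the selector is then the constant set $M_0$, so assume $k \geq 1$. The first observation is that any realizable $k$-th wave with $k \geq 1$ is disjoint from $M_0$: indeed $\mathcal{W}(M_0,k)(A)$ is obtained by deleting $\bigcup_{0 \leq t \leq k-1}\mathcal{W}(M_0,t)(A)$, which contains $\mathcal{W}(M_0,0)(A) = M_0$. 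Hence $P(\mathcal{S}_{\mathcal{W}(M_0,k)} = S) > 0$ forces $S \cap M_0 = \emptyset$, so every $\sigma \in \Sigma_S$ fixes $M_0$ pointwise; being a bijection mapping $S$ into $S$, it also satisfies $\sigma(S) = S$ and $\sigma(M_0) = M_0$.

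The combinatorial core is a neighborhood identity together with an induction on wave index. Writing $N_G(T) = \bigcup_{i \in T}\{j : A_{ij} = 1\}$ for the (out-)neighborhood of a vertex set $T$, the relation $A^\sigma_{ij} = A_{\sigma(i)\sigma(j)}$ yields $N_{G^\sigma}(T) = \sigma^{-1}\big(N_G(\sigma(T))\big)$ for every permutation $\sigma$. I would then prove by induction on $t$ that, whenever $\sigma$ fixes $M_0$ pointwise,
\[
\mathcal{W}(M_0,t)(A^\sigma) = \sigma^{-1}\big(\mathcal{W}(M_0,t)(A)\big) \qquad \text{for all } t \geq 0 \text{ and all } A .
\]
The base case $t = 0$ is just $M_0 = \sigma^{-1}(M_0)$. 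For the inductive step one substitutes the neighborhood identity and the induction hypothesis into the recursion defining $\mathcal{W}(M_0,t)$, using that the bijection $\sigma^{-1}$ commutes with unions and set differences and that $\sigma \circ \sigma^{-1} = \mathrm{id}$; the only terms requiring the hypothesis ``$\sigma$ fixes $M_0$'' are those involving the zeroth wave $M_0$, which are handled exactly by the first paragraph.

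With the equivariance identity in hand at $t = k$, the proof closes in one line: for every realization of $V$ (equivalently of $A$),
\[
\mathcal{W}(M_0,k)(A^\sigma) = S \iff \sigma^{-1}\big(\mathcal{W}(M_0,k)(A)\big) = S \iff \mathcal{W}(M_0,k)(A) = \sigma(S) = S ,
\]
so the events $\{\mathcal{S}_{\mathcal{W}(M_0,k)}(V^\sigma) = S\}$ and $\{\mathcal{S}_{\mathcal{W}(M_0,k)}(V) = S\}$ coincide for every $\sigma \in \Sigma_S$, which is precisely the definition of an invariant selector. It is worth stressing that the argument is purely deterministic and does not use Assumption \ref{assumption-exchangeability}; it concerns only the structure of the breadth-first-layer map $A \mapsto \mathcal{W}(M_0,k)$, with $A$ read off from $V$.

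As for difficulty, there is no serious obstacle here. The one point requiring care is the bookkeeping of the set difference $\setminus \bigcup_{0 \leq t \leq k-1}\mathcal{W}(M_0,t)$ under $\sigma^{-1}$, together with the verification that $\Sigma_S$ genuinely fixes the seed set pointwise. Once $S \cap M_0 = \emptyset$ is noted, the remainder is the routine fact that the breadth-first layers emanating from a fixed seed set are preserved by graph isomorphisms that fix the seeds.
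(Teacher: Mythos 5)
Your proof is correct. The core insight is the same as in the paper's argument --- permutations in $\Sigma_S$ preserve the wave construction because any realizable $k$-th wave is disjoint from $M_0$, so such permutations fix the seeds and everything outside $S$ --- but you package it differently. The paper works conditionally on the event: for a configuration $a$ with $\mathcal{W}(M_0,k)(a)=S$ it observes that the earlier waves of $a^\sigma$ coincide with those of $a$ (edges emanating from earlier-wave nodes, which lie outside $S$, are untouched), then uses bijectivity of $\sigma$ on $S$ to conclude the $k$-th wave is still $S$, and repeats the argument with $\sigma^{-1}$ for the reverse inclusion. You instead prove the unconditional equivariance identity $\mathcal{W}(M_0,t)(A^\sigma)=\sigma^{-1}\bigl(\mathcal{W}(M_0,t)(A)\bigr)$ for every $A$ and every $\sigma$ fixing $M_0$ pointwise, by induction on $t$ via $N_{G^\sigma}(T)=\sigma^{-1}\bigl(N_G(\sigma(T))\bigr)$, and then read off both inclusions at once from $\sigma(S)=S$. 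Your route is slightly stronger and more reusable: the same equivariance lemma, applied to the $k$-hop construction by an identical induction, would also yield Proposition \ref{proposition-k-hop-invariant}, which the paper proves separately through shortest-path distances; and it makes explicit the step, left implicit in the paper, that $P(\mathcal{S}_{\mathcal{W}(M_0,k)}=S)>0$ forces $S\cap M_0=\emptyset$, which is exactly what guarantees that elements of $\Sigma_S$ fix the seed set.
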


It is also possible to consider unions of $k$-hop neighborhoods starting from the set of seed nodes $M_0$.   For $k \leq 1$, let $\mathcal{K}(M_0, k) = \mathcal{W}(M_0, k)$ and and for $k > 1 $, define:
\begin{align*}
\mathcal{K}(M_0, k) = \bigcup_{l=1}^{k-1} \bigcup_{i \in \mathcal{K}(M_0, l )} \{ j \ | \ A_{ij} = 1 \} \setminus M_0.
\end{align*}
Analogous to before, define the selection rule $\mathcal{S}_{\mathcal{K}(M_0,k)}$ by the following equivalence:
\begin{align*}
\mathcal{S}_{\mathcal{K}(M_0,k)} = S \iff \mathcal{K}(M_0, k) = S. 
\end{align*}
The following statement is proved in the Appendix:
\begin{proposition}
\label{proposition-k-hop-invariant}
$\mathcal{S}_{\mathcal{K}(M_0,k)}$ is an invariant selector. 
\end{proposition}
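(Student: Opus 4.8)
The plan is to reduce the claim to an \emph{equivariance} property of the map $A \mapsto \mathcal{K}(A;M_0,k)$ under relabelings that fix the seed set, and then exploit the fact that every $\sigma \in \Sigma_S$ restricts to a bijection of $S$ onto itself. I write $\mathcal{K}(A;M_0,k)$ to stress the dependence on the adjacency matrix, recalling that the selector $\mathcal{S}_{\mathcal{K}(M_0,k)}$ depends on $V$ only through $A$ and that the $A$-component of $V^\sigma$ is $A^\sigma$ with $(A^\sigma)_{ij} = A_{\sigma(i)\sigma(j)}$.

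First I would record a small observation: if $P(\mathcal{S}_{\mathcal{K}(M_0,k)} = S) > 0$ then $S \cap M_0 = \emptyset$, since by construction $\mathcal{K}(A;M_0,\ell)$ contains an explicit removal of $M_0$ and never meets $M_0$ for any $\ell \geq 1$. Hence every $\sigma \in \Sigma_S$ fixes $M_0$ pointwise (because $M_0 \subseteq [n] \setminus S$) and maps $S$ bijectively onto $S$, so in particular $\sigma(S) = \sigma^{-1}(S) = S$.

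Next I would prove the equivariance lemma: for every permutation $\tau$ of $[n]$ with $\tau(i) = i$ for all $i \in M_0$, and every $\ell \geq 1$, one has $\mathcal{K}(A^\tau;M_0,\ell) = \tau^{-1}(\mathcal{K}(A;M_0,\ell))$. The proof is by induction on $\ell$. The base case $\ell = 1$ uses $\mathcal{K}(A;M_0,1) = \mathcal{W}(A;M_0,1)$ together with the identity $\bigcup_{i \in M_0}\{j : A^\tau_{ij} = 1\} = \bigcup_{i \in M_0}\tau^{-1}(\{j' : A_{ij'} = 1\}) = \tau^{-1}\big(\bigcup_{i\in M_0}\{j':A_{ij'}=1\}\big)$, after which $\tau^{-1}$ commutes with removing $M_0$ since $\tau^{-1}(M_0) = M_0$. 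For the inductive step I would substitute the inductive hypothesis for $\ell' < \ell$ into the defining recursion for $\mathcal{K}(A^\tau;M_0,\ell)$, re-index each inner union by $i \mapsto \tau(i)$, pull $\tau^{-1}$ out of the neighbor sets exactly as in the base case, and once more use that $\tau^{-1}$ commutes with the set difference against $M_0$; the elementary identities $\tau^{-1}(X)\setminus M_0 = \tau^{-1}(X \setminus M_0)$ and $\tau^{-1}(X \setminus Z) = \tau^{-1}(X)\setminus\tau^{-1}(Z)$ carry the whole argument. This is the same computation used for the wave recursion in Proposition~\ref{proposition-snowball-invariant}, now applied to the $\mathcal{K}$-recursion.

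Finally I would combine the two ingredients. Fix $S$ with $P(\mathcal{S}_{\mathcal{K}(M_0,k)} = S) > 0$ and $\sigma \in \Sigma_S$. Since $\sigma$ fixes $M_0$ pointwise, the equivariance lemma with $\tau = \sigma$ and $\ell = k$ gives $\mathcal{K}(A^\sigma;M_0,k) = \sigma^{-1}(\mathcal{K}(A;M_0,k))$ identically in $A$. Because $\sigma^{-1}(S') = S$ if and only if $S' = \sigma(S) = S$, this yields $\{\mathcal{S}_{\mathcal{K}(M_0,k)}(V^\sigma) = S\} = \{\mathcal{K}(A^\sigma;M_0,k) = S\} = \{\mathcal{K}(A;M_0,k) = S\} = \{\mathcal{S}_{\mathcal{K}(M_0,k)}(V) = S\}$, which is precisely the definition of an invariant selector. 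I do not expect a genuine obstacle here; the only delicate points are that the relabeling acts on the \emph{output} set as $\sigma^{-1}$ rather than the identity — so one really does need $\sigma(S) = S$ — and that $\sigma$ must be checked to fix the seeds, which is exactly where disjointness of $S$ and $M_0$ (and hence the harmlessness of the $\setminus M_0$ terms under $\tau^{-1}$) enters.
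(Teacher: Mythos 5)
Your proposal is correct, but it takes a different route from the paper's proof of this proposition. The paper reduces the event $\{\mathcal{K}(M_0,k)=S\}$ to a characterization in terms of geodesic distances from the seed set — $\min_{i \in M_0} d(i,j) \le k$ for $j \in S$ and $>k$ for $j \in S^c \setminus M_0$ — and then argues that shortest-path lengths are unchanged by relabeling, using that $\sigma \in \Sigma_S$ restricted to $S$ is a bijection. You instead work directly with the recursive definition of $\mathcal{K}(M_0,\ell)$ and prove, by induction on $\ell$, the exact equivariance identity $\mathcal{K}(A^\tau;M_0,\ell) = \tau^{-1}\bigl(\mathcal{K}(A;M_0,\ell)\bigr)$ for every permutation $\tau$ fixing $M_0$ pointwise, then specialize to $\sigma \in \Sigma_S$ via $\sigma(S)=S$ and the observation that $P(\mathcal{S}=S)>0$ forces $S \cap M_0 = \emptyset$; this is closer in spirit to the paper's proof of the wave proposition than to its proof here. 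Each approach has something to recommend it: the distance-based argument is short and makes the geometric invariance transparent, but it silently relies on the equivalence between the written recursion and the "distance at most $k$" description; your induction needs no such translation — it establishes the invariance for the selector exactly as defined — and the equivariance lemma is a reusable, stronger statement (it holds for all seed-fixing permutations, not only those in $\Sigma_S$, and gives event equality identically in $A$, matching the strong form of the invariant-selector definition). Your attention to the two delicate points — that relabeling acts on the output as $\sigma^{-1}$, so $\sigma(S)=S$ is genuinely needed, and that the $\setminus M_0$ terms commute with $\tau^{-1}$ because $\tau^{-1}(M_0)=M_0$ — is exactly where the argument could otherwise go wrong, and you handle both correctly.
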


While it may initially seem that using $\mathcal{S}_{\mathcal{K}(M_0,k)}$ would result in tighter prediction intervals since the resulting sample size is no smaller than the sample size corresponding to $\mathcal{S}_{\mathcal{W}(M_0,k)}$, it should be noted that the nodes selected by $\mathcal{S}_{\mathcal{W}(M_0,k)}$  may be more similar to each other. See Section \ref{sec-snowball-sampling} for some experimental evidence of this phenomenon.




\section{Asymptotic Validity Under Random Walk Sampling}
\label{sec:asymptotic-validity-random-walk}

Another approach to sampling nodes involves traversing along edges via a random walk.  In some sense, this is a depth-first sampling strategy that is quite different in spirit from snowball sampling, which typically involves more breadth. Instead of aiming for a form of validity conditional on a selection event, we consider the more ambitious problem of using information from nodes visited by the random walk to generate (asymptotically) valid prediction sets for a new, randomly selected test point.  We show that, while the random walk is biased towards nodes with higher degrees, one may correct for this bias using appropriate weights, leading to asymptotically valid conformal prediction sets.  Moreover, we show that, for a large class of models in Setting \ref{setting-independent-triples}, one may start the random walk at an arbitrary point and one may only need to traverse $\omega(\log n)$ nodes to achieve asymptotically valid prediction sets.

We now elaborate on our conformal prediction algorithm over the random walk.   Let $\mathcal{X}_k \in \{1, \ldots, n\}$ denote the label of the node visited on step $k$ of the walk.  Recall that a random walk on a graph is a Markov chain in which the transition kernel is given by:
\begin{align*}
\tilde{P}_n(\mathcal{X}_{k+1} = j\ | \ \mathcal{X}_k =  i )  = \begin{cases}
\frac{1}{D_i} & j \in \mathcal{N}(i) \\ 
0 &  j \not \in \mathcal{N}(i),
\end{cases}
\end{align*}
where $D_i = \sum_{j\neq i} A_{ij}$ is the degree of node $i$ and $\mathcal{N}(i) = \{ j \in \{1, \ldots, n \} \ | \  A_{ij} = 1 \}$ is neighbor set of node $i$.  Note that the kernel depends on $V$ and is itself random; we also use the subscript $n$ to emphasize that the probability measure also changes due to the triangular array setup.  As a technicality, we assume that the random variables determining the choice of neighbor at each step are independent of all other random variables.  In practice, when using a random walk, it is uncommon to store information that would be needed to construct an induced subgraph.  Thus, we will not consider network covariates and will only conventional covariates as inputs into a model. 

 We consider a split conformal prediction procedure, where the first $m+1$ observations $\tilde{D}_1 = \{0, \ldots, m\}$ are used to fit a model and the last $m$ points $\tilde{D}_2 = \{m+1, \ldots, 2m \}$ are used construct non-conformity scores.  Let $(\tilde{Y}_0, \tilde{X}_0), \ldots (\tilde{Y}_{2m}, \tilde{X}_{2m})$ denote the values of the $(Y,X)$ pairs visited by the random walk, where $\widetilde{\mathcal{D}}_1 = (\tilde{Y}_i, \tilde{X}_i)_{i \in \tilde{D}_1}$.  Moreover, let $\tilde{S}_i = s(\tilde{Y}_{m+i}, \tilde{X}_{m+i}; \widetilde{\mathcal{D}}_1)$ denote non-conformity scores computed on $\tilde{D}_2.$
 We consider a prediction set of the form:
\begin{align}
\label{eq-conformal-prediction-set}
\widetilde{C}_m(X') = \left\{ y \in \mathbb{R} \ \bigr\rvert \ \frac{1}{m}\sum_{i=1}^{m} \nu(\mathcal{X}_{m+i}) \mathbbm{1}(\tilde{S}_i \leq s(y,X'; \widetilde{\mathcal{D}}_1)) \leq 1-\alpha   \right\},
\end{align}
where the weight function $\nu:\{1,\ldots, n\} \mapsto \mathbb{R}$ is given by:
\begin{align}
\nu(j) = \frac{1}{n \pi(j)} = \frac{2|E|}{nD_{j}}.
\end{align}
Above, $\pi(j)$ is the probability of visiting node $j$ according to the stationary measure associated with the random walk; note that this measure is also random in our setup.  For a simple random walk, it is well-known that $\pi(j) = D_j/2|E|$; see for example, \citet{lovasz1993random}.  When traversing a graph using a random walk, it is common to assume that we observe all neighbors of the current node; thus, requiring knowledge of a degree of a given node is not restrictive.  However, to construct these weights, we do require some global information about the graph: namely, we need to know the total number of nodes and edges.  For most datasets found in network data repositories, this information is usually known, but this global information may not be known precisely in web-crawling applications.

It should be noted that the weights in our weighted conformal prediction procedure are not normalized, which is more typical in the literature.  We view the quantity $\frac{1}{m} \sum_{i=1}^m \nu(\mathcal{X}_{m+i}) \mathbbm{1}(\tilde{S}_i \leq x)$ as a weighted empirical process (c.f. \citet{b3105f83-fbb8-3bc3-a28f-c0294831bf87}), where the weights are used to control the expectation conditional on $V.$  Furthermore, since the coverage guarantee will be asymptotic, in (\ref{eq-conformal-prediction-set}), we choose a prediction set that is more natural in the limit. We are now ready to state a theorem that establishes conditions under which our weighted conformal prediction procedure yields asymptotically valid prediction sets.

\begin{theorem}
\label{thm-conformal-prediction-random-walk}
Suppose that we are in Setting \ref{setting-independent-triples}, where the sparse graphon model satisfies $\rho_n = \omega(\log^{3/2} n / n)$ and $0 < c\leq w(x,y) \leq C < \infty$ almost surely.    
Furthermore, let   $(\tilde{Y}_0, \tilde{X}_0), \ldots, (\tilde{Y}_{2m}, \tilde{X}_{2m})$ be observations associated with the random walk, where $m = \omega(\log n)$ and the initial node is chosen by an arbitrary measurable function $f(V, \eta)$, where $\eta$ is independent of all other random variables.  Further suppose that the following conditions hold:
\begin{itemize}
\item[(A1)] \label{assumption-score-consistency} (Score Consistency) There exists a score function $s_*(x,y)$ that does not depend on $\widetilde{\mathcal{D}}_1$ such that, for any $\epsilon > 0$, 
\begin{align*}
\frac{1}{m} \sum_{i=1}^{m} P\left( |\tilde{S}_i - s_*(\tilde{Y}_i, \tilde{X}_i)| > \epsilon \right) = o(1), \ \ P\left(|s(Y',X'; \widetilde{\mathcal{D}}_1) -s_*(Y',X')| > \epsilon \right) = o(1).
\end{align*}
\item[(A2)]\label{assumption-score-density} (Score Density) $S_i = s_*(Y_i, X_i)$ is continuously distributed with density upper bounded by $K < \infty.$
\end{itemize}
Then, for the weighted split conformal prediction procedure defined in (\ref{eq-conformal-prediction-set}) and  a randomly chosen observation $(Y',X')$:
\begin{align*}
P\left(Y' \in \widetilde{C}_m(X') \right) = 1-\alpha + o(1).
\end{align*}
\end{theorem}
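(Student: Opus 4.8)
The plan is to show that the weighted empirical CDF of the calibration scores converges uniformly to the \emph{population} CDF of the oracle score, so that evaluating it at the (estimated) test score behaves asymptotically like a uniform random variable. Write $\widehat{F}_m(x) = \frac{1}{m}\sum_{i=1}^m \nu(\mathcal{X}_{m+i})\mathbbm{1}(\tilde{S}_i \leq x)$ for the quantity appearing in (\ref{eq-conformal-prediction-set}), so that $Y' \in \widetilde{C}_m(X')$ if and only if $\widehat{F}_m(s(Y',X';\widetilde{\mathcal{D}}_1)) \leq 1-\alpha$. Let $s_*$ be the limiting score in (A1), $S_i^* = s_*(\tilde{Y}_{m+i},\tilde{X}_{m+i})$, let $F_n(x) = \frac{1}{n}\sum_{j=1}^n \mathbbm{1}(s_*(Y_j,X_j)\leq x)$ be the empirical population CDF, and $F_*(x) = P(s_*(Y_1,X_1)\leq x)$, which is continuous by (A2). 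The structural fact driving everything is a de-biasing identity: conditional on $V$, if the walk were in its stationary distribution $\pi$, then $E[\nu(\mathcal{X}_{m+i})\mathbbm{1}(S_{m+i}^* \leq x)\mid V] = \sum_j \pi(j)\,\frac{1}{n\pi(j)}\mathbbm{1}(s_*(Y_j,X_j)\leq x) = F_n(x)$, so the unnormalized weights exactly remove the degree bias of the random walk.

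First I would isolate a high-probability ``good graph'' event on which (i) all degrees are comparable to $n\rho_n$, so that (using $\int_0^1\!\int_0^1 w = 1$, which gives $2|E| = \Theta(n^2\rho_n)$ and $D_j = \Theta(n\rho_n)$ uniformly in $j$ by Bernstein plus a union bound, valid since $n\rho_n = \omega(\log n)$) the weights $\nu(j) = 2|E|/(nD_j)$ are bounded above and below by absolute constants, and (ii) the random walk has spectral gap bounded away from $0$, hence mixing time $O(\log n)$. For (ii) I would compare the random transition matrix to its graphon ``population'' analogue, which has a spectral gap bounded below because $w \geq c > 0$, using matrix concentration for the adjacency matrix together with the degree bounds; this is where the stronger sparsity condition $\rho_n = \omega(\log^{3/2} n/n)$ is used, to make the operator-norm perturbation small relative to the population gap.

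Second, on the good event I would show $\sup_x |\widehat{F}_m^*(x) - F_*(x)| = o_P(1)$, where $\widehat{F}_m^*$ is $\widehat{F}_m$ with $\tilde{S}_i$ replaced by $S_i^*$. Since $m = \omega(\log n)$ exceeds the $O(\log n)$ mixing time, the burn-in over $\widetilde{\mathcal{D}}_1$ brings the chain within total variation $o(1)$ of $\pi$ by step $m+1$, and this distance is non-increasing along the chain, so the conditional mean of $\widehat{F}_m^*(x)$ given $V$ equals $F_n(x) + o(1)$ uniformly in $x$ (using boundedness of $\nu$). The fluctuation around this mean is controlled by a Bernstein-type concentration inequality for additive functionals of a reversible Markov chain, whose variance proxy scales like (mixing time)$/m = o(1)$; a union bound over an $\epsilon$-grid of thresholds plus monotonicity of $x\mapsto\widehat{F}_m^*(x)$ upgrades this to a uniform bound. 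Combining with the classical Glivenko--Cantelli statement $\sup_x|F_n(x)-F_*(x)|\to 0$ for the i.i.d.\ triplets of Setting \ref{setting-independent-triples} yields the claim for $\widehat{F}_m^*$. Then (A1) together with the density bound (A2) and boundedness of $\nu$ lets me replace $S_i^*$ by $\tilde{S}_i$: the indicators disagree only on the $o_P(1)$-fraction of indices with $|\tilde{S}_i - S_i^*| > \epsilon$, and elsewhere the weighted sum is sandwiched between $\widehat{F}_m^*(x-\epsilon)$ and $\widehat{F}_m^*(x+\epsilon)$, whose spread is at most $2K\epsilon + o_P(1)$; sending $\epsilon\to 0$ slowly gives $\sup_x|\widehat{F}_m(x) - F_*(x)| = o_P(1)$.

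Finally I would assemble the pieces. By the second part of (A1), $s(Y',X';\widetilde{\mathcal{D}}_1) = s_*(Y',X') + o_P(1)$, and by the uniform convergence just established together with the continuity of $F_*$, $\widehat{F}_m(s(Y',X';\widetilde{\mathcal{D}}_1)) = F_*(s_*(Y',X')) + o_P(1)$. Since $(Y',X')$ is drawn from the population distribution and $F_*$ is continuous, $F_*(s_*(Y',X'))$ is exactly $\mathrm{Uniform}[0,1]$, so $\widehat{F}_m(s(Y',X';\widetilde{\mathcal{D}}_1))$ converges in distribution to a $\mathrm{Uniform}[0,1]$ variable; as that limit law is continuous at $1-\alpha$, we get $P(Y'\in\widetilde{C}_m(X')) = P(\widehat{F}_m(s(Y',X';\widetilde{\mathcal{D}}_1))\leq 1-\alpha) \to 1-\alpha$. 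I expect the main obstacle to be the first step: uniformly controlling the spectral gap, hence the mixing time and the adequacy of the length-$m$ burn-in, for the random walk on the random graphon-generated graph, since the relevant dependence constants are themselves random and must be controlled on an event of probability $1-o(1)$ --- this is precisely what forces the $\rho_n = \omega(\log^{3/2} n/n)$ condition, and it is where ideas from the time-series conformal arguments mentioned in the introduction need the most adaptation.
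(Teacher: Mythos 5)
Your proposal is correct and follows essentially the same route as the paper: a high-probability graph event on which degree/edge-count concentration bounds the weights and eigenvalue concentration toward the graphon integral operator (whose strict positivity, handled in the paper via Krein--Rutman so that the most negative eigenvalue is also controlled) gives a uniform spectral gap and $O(\log n)$ mixing, followed by a total-variation burn-in to stationarity, uniform convergence of the weighted empirical CDF of the oracle scores, the (A1)/(A2) swap of estimated for oracle scores, and a probability-integral-transform conclusion. The only differences are minor technical choices (the paper obtains uniformity over thresholds from a VC-subgraph empirical-process bound for stationary $\beta$-mixing sequences rather than your Bernstein-plus-grid argument, and compares to the empirical CDF over all $n$ nodes rather than directly to $F_*$), not a different approach.
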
 

In the above theorem, we  require that the non-conformity score can be approximated by a non-conformity score that does not depend on $\widetilde{\mathcal{D}}_1$; a similar condition is imposed by \citet{DBLP:conf/colt/ChernozhukovWZ18}.  This condition typically holds if the estimated function is converging in a suitable sense to a limiting quantity, even if the model is misspecified. While this property needs to hold with respect to the random walk, under the conditions in our theorem, the Markov chain is rapidly mixing so dependence itself will often be weak enough to not be an issue.  While not necessary, one may also choose to discard some training points during a burn-in phase for the Markov chain.  It should be also noted that this condition can be replaced by a stability condition at the expense of longer proofs.  Finally, one may wonder whether a function learned on the random walk would lead to a non-conformity score with good properties for an independent test point.  It should be noted that functions commonly used to construct non-conformity scores are functionals of the distribution of $Y \ | \ X$ and are thus not strongly influenced by a bias towards higher degree nodes.    

In addition, our theorem above suggests that, even though the underlying graph is random, one does not need to carefully choose the starting point or the number of nodes traversed for each instantiation of the graph. As an intermediate step, we study the rate of convergence of a random walk on a random graph to its stationary distribution.  In particular, for a large class of sparse graphon models, we show that the random walk mixes quite fast with high probability from any initial point. We state this result below.  While random walks on Erd\H{o}s-R\'{e}nyi graphs are well-studied even in very sparse regimes (see for example, \citet{10.1214/17-AOP1189} and references therein), to the best of our knowledge, a result of this type was not previously known under the sparse graphon model.      
\begin{theorem}[Mixing Rate of Random Walks on Sparse Graphons]
\label{thm:mixing-time}
Suppose that $\{A^{(n)}\}_{n \geq 1}$ is a sequence of graphs generated by the sparse graphon model (\ref{eq:sparse-graphon-model}), where the graphon satisfies $0 < c \leq w(x,y) \leq C < \infty$ almost surely and $\rho_n = \omega(\log^{3/2} n / n).$ Then, there exists constants $0 < K < \infty$ and $0 \leq \gamma < 1$ depending only on $w(\cdot,\cdot)$ such that:
\begin{align*}
P\left(\forall{t} \geq 1, \ \max_{x_0 \in \{1 ,\ldots, n\} } \|\widetilde{P}_n^t(\cdot \ | \ \mathcal{X}_0 = x_0) -  \pi(\cdot) \|_{TV} \leq K \sqrt{n} \gamma^t\right)  \rightarrow 1.
\end{align*}
\end{theorem}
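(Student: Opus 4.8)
The plan is to reduce the statement to two high-probability facts — a lower bound $\min_j \pi(j) \gtrsim 1/n$ and an upper bound $\lambda_* \le \gamma_0 < 1$ on the second-largest-in-modulus eigenvalue of the walk — and then invoke the classical eigenvalue bound for the mixing of reversible chains. The simple random walk $\widetilde P_n$ is reversible with respect to $\pi(\cdot) = D_\cdot/(2|E|)$, so on the event that $A^{(n)}$ is connected and non-bipartite its eigenvalues satisfy $1 = \lambda_1 > \lambda_2 \ge \cdots \ge \lambda_n > -1$ and coincide with those of the symmetric matrix $\mathcal A := D^{-1/2} A^{(n)} D^{-1/2}$, and the standard bound gives
\begin{align*}
\max_{x_0 \in \{1,\ldots,n\}} \|\widetilde P_n^{\,t}(\cdot \mid \mathcal X_0 = x_0) - \pi(\cdot)\|_{TV} \;\le\; \tfrac12\big(\min_j \pi(j)\big)^{-1/2}\,\lambda_*^{\,t}, \qquad \lambda_* := \max_{2 \le i \le n}|\lambda_i|.
\end{align*}
Hence it suffices to exhibit constants $c_0 > 0$ and $\gamma_0 \in [0,1)$ depending only on $w$, and events $\Omega_n$ with $P(\Omega_n) \to 1$, on which $A^{(n)}$ is connected and non-bipartite, $\min_j \pi(j) \ge c_0/n$, and $\lambda_* \le \gamma_0$. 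The theorem then follows with $\gamma = (1+\gamma_0)/2$ (to absorb vanishing lower-order terms) and $K = \tfrac12 c_0^{-1/2}$, simultaneously for all $t \ge 1$.

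For the bound on $\pi$, condition on the latent positions and set $d_i := \sum_{j \ne i} \rho_n w(\xi_i,\xi_j)\wedge 1$; since $0 < c \le w \le C$ and $\rho_n \to 0$ we have $c\rho_n n \lesssim d_i \lesssim C\rho_n n$ deterministically. As $D_i$ is a sum of $n-1$ independent Bernoulli variables with mean $d_i$, Bernstein's inequality and a union bound give $\max_i |D_i - d_i| = O(\sqrt{\rho_n n \log n})$ with probability $\to 1$, and likewise $|E| = \tfrac12\sum_i D_i$ concentrates around an $\asymp \rho_n n^2$ quantity; because $\rho_n n = \omega(\log n)$ this error is $o(\rho_n n)$, so on the corresponding event $\min_i D_i \ge c'\rho_n n$, $|E| \le C'\rho_n n^2$, and therefore $\pi(j) = D_j/(2|E|) \ge c_0/n$. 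This step is where the $\sqrt n$ prefactor in the statement originates.

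For the spectral gap I would compare $\mathcal A$ to a deterministic population matrix. Let $P = (\rho_n w(\xi_i,\xi_j)\wedge 1)_{i\ne j}$ with zero diagonal, $\bar D = \mathrm{diag}(d_i)$, and $\bar{\mathcal A} := \bar D^{-1/2} P \bar D^{-1/2}$. Operator-norm concentration of the inhomogeneous random graph — valid under the stated sparsity $\rho_n = \omega(\log^{3/2} n/n)$ — yields $\|A^{(n)} - P\|_{op} = O(\sqrt{\rho_n n})$ with probability $\to 1$; combined with the degree concentration above (which controls $\|D^{-1/2} - \bar D^{-1/2}\|_{op}$, $\|\bar D^{-1/2}\|_{op} \asymp (\rho_n n)^{-1/2}$, and the crude bound $\|P\|_{op} = O(\rho_n n)$), this gives $\|\mathcal A - \bar{\mathcal A}\|_{op} = o(1)$ with probability $\to 1$. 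Next one analyzes $\bar{\mathcal A}$ directly: the vector $(\sqrt{d_i})_i$ is an exact eigenvector with eigenvalue $1$, which is simple and largest by Perron--Frobenius (all off-diagonal entries are positive), while the remaining eigenvalues converge, by a law-of-large-numbers argument for kernel matrices, to those of the compact self-adjoint integral operator $T$ on $L^2[0,1]$ with kernel $\widetilde w(x,y) = w(x,y)/\sqrt{\bar w(x)\bar w(y)}$, $\bar w(x) := \int_0^1 w(x,v)\,dv$ (the top direction of $T$ being $\sqrt{\bar w}$). Because $c \le w \le C$ forces $\widetilde w \ge c/C > 0$ pointwise, the top eigenvalue $1$ of $T$ is simple (Jentzsch's theorem), so $\lambda_2(T) < 1$; and writing $f = f_+ - f_-$ one checks $\langle Tf,f\rangle \ge -\|f\|^2$ with equality impossible for $f \ne 0$, so $\lambda_{\min}(T) > -1$. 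Thus $\gamma_0 := \max(\lambda_2(T), |\lambda_{\min}(T)|) < 1$ depends only on $w$, and Weyl's inequality gives $\lambda_*(\mathcal A) \le \gamma_0 + o(1)$ with probability $\to 1$; in particular $\lambda_2 < 1$ (so $A^{(n)}$ is connected) and $\lambda_n > -1$ (so $A^{(n)}$ is non-bipartite) on this event.

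Taking $\Omega_n$ to be the intersection of the (finitely many) high-probability events above gives $P(\Omega_n) \to 1$, and on $\Omega_n$, for $n$ large, all the requirements of the first paragraph hold, completing the proof. The main obstacle is the spectral-gap step: on the stochastic side, pushing $\|\mathcal A - \bar{\mathcal A}\|_{op}$ down to $o(1)$ uniformly requires carefully disentangling the graph fluctuation $A^{(n)} - P$ from the degree renormalization, and this is exactly where the sparsity hypothesis is spent; on the deterministic side, the bound $\lambda_{\min}(T) > -1$ — i.e.\ that the limiting graphon operator is not asymptotically bipartite — together with $\lambda_2(T) < 1$ relies essentially on the uniform positivity $w \ge c > 0$ and would fail without it.
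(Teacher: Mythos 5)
Your proposal is correct and follows essentially the same route as the paper: the Lov\'asz-type TV bound for reversible walks, a high-probability bound $\min_i \pi(i)\gtrsim 1/n$ from degree concentration, and a spectral gap obtained by perturbing the normalized adjacency matrix (Weyl plus operator-norm and degree concentration) down to a kernel matrix whose eigenvalues converge to those of the integral operator $T_w$, with a Perron-type theorem for positive compact operators supplying a gap depending only on $w$. The only differences are cosmetic: you use an intermediate matrix normalized by conditional expected degrees and invoke Jentzsch's theorem with the explicit eigenfunction $\sqrt{\bar w}$, whereas the paper normalizes by $\mathfrak{D}(\xi_i)$, cites Koltchinskii--Gin\'e and Vu's concentration bound (the source of the $\log^{3/2}n$ sparsity condition), and appeals to the Krein--Rutman theorem.
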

We now discuss some of the key steps to proving the above theorem. It is well-known that the mixing rate of a random walk on a graph is closely related to the eigenvalues of the normalized adjacency matrix $\mathscr{A} = D^{-1/2} A D^{-1/2},$ where $D$ is a diagonal matrix for which $D_{ii}$ is equal to $D_i$.  In particular, let  $\lambda_{-1}(\mathscr{A})$ denote the negative eigenvalue of $\mathscr{A}$ with the largest magnitude and define $\gamma(\mathscr{A}) = \max\{\lambda_2(\mathscr{A}), |\lambda_{-1}(\mathscr{A})|\}$. The following bound for the rate of convergence of a random walk is well-known (c.f. \citet{lovasz1993random} Theorem 5.1):
\begin{align*}
\|\widetilde{P}_n^t(\ \cdot \ | \ \mathcal{X}_0 = i) - \pi(\cdot) \|_{TV} \leq \frac{ \gamma(\mathscr{A})^t}{\sqrt{\pi(i)}}.
\end{align*}
 To establish a uniform bound on the eigengap $1-\gamma(\mathscr{A})$ on a high probability set, we study the concentration of eigenvalues of $\mathscr{A}$ around the corresponding eigenvalues of the following integral operator $T_w: L^2[0,1] \mapsto L^2[0,1]:$
\begin{align*}
T_wf = \int_0^1 \frac{w(x,y)}{\sqrt{\mathfrak{D}(x) \ \mathfrak{D}(y)}} f(y) \ dx,
\end{align*}
where $\mathfrak{D}(x) = \int_0^1 w(x,y) \ dy$ is related to the degree distribution.  If the eigenvalues of $\mathscr{A}$ are concentrating in an appropriate sense, then an eigengap for the integral operator would imply a high probability bound on $\gamma(\mathscr{A}).$ The concentration of the normalized adjacency matrix around a typical matrix is well-studied (see for example, the review article by \citet{le-concentration-random-graph}), but concentration results with respect to the eigenvalues of the integral operator are less well-known. The most closely related work that we are aware of is that of \citet{oliveira2010concentration}, who states a concentration result for the normalized Laplacian in the discussion section of the paper.  Since a form of concentration for the eigenvalues of $\mathscr{A}$ around the corresponding eigenvalues of $T_w$ is critical to our argument, for completeness, in the Appendix we provide a proof of this property.  Moreover, by appealing to a generalization of the Perron-Frobenius theorem to Banach spaces known as the Krein-Rutman theorem, stated in Proposition \ref{proposition-krein-rutman} of the Appendix, we argue that the condition $0<c \leq w(x,y) \leq C < \infty \ a.s.$ is enough to ensure the existence of an eigengap for the integral operator. In the dense case, we have recently found that \citet{doi:10.1137/20M1339246} make an argument for a gap between $\lambda_1$ and $\lambda_2$ of the normalized Laplacian operator, but in comparison, our approach also implies a gap with respect to the negative eigenvalue with largest magnitude.       

\section{Experiments}
\label{sec:experiments}

\subsection{Snowball Sampling}
\label{sec-snowball-sampling}
To thoroughly put our theory to the test, we consider a simulation study with a fairly complicated data generating process that is still jointly exchangeable.   In particular, we study a spatial autoregressive model that also depends on latent positions of a rank 3 RDPG model. Let $n=2000$ be the superpopulation size and  $\nu_n = 5n^{-0.25}$ be the sparsity level.  For the network component of the model, similar to \citet{network-conformal} we consider a truncated eigendecomposition of the graphon $w(x,y) = \min(x,y)$, which has the eigenvalue-eigenfunction pairs:
\begin{align*}
\lambda_k = \left(\frac{2}{(2k-1) \pi}\right)^2, \ \ \  \phi_k(x) = \sin\left(\frac{(2k-1)\pi x}{2} \right).   
\end{align*}
See \citet{xu2018rates} for a derivation of these spectral properties. We generate $\xi_1, \ldots, \xi_n \sim \mathrm{Uniform}[0,1]$ and let $Z_{ik} = \sqrt{\lambda_k \phi_k(\xi_i)}$ for $k=1,2,3$, so that:
\begin{align*}
P_{ij} = P(A_{ij} = 1 \ | \ \xi_i, \xi_j) = \nu_n Z_i^T Z_j.
\end{align*}

For the regression component, consider the following model:
\begin{align*}
Y_i = 2+ 0.5 \tilde{Y}_i + 2 \tilde{X}_i + 10 Z_{i1} + 20 Z_{i2} + 10 Z_{i3} + 4 X_i + \epsilon_i,  
\end{align*}
where $\tilde{Y}_i$ and $\tilde{X}_i$ are the neighbor-weighted response and covariate, respectively, $X_i = U_i + 4Z_{i1}$ for $U_i \sim \mathrm{Uniform}[-2,1]$, and $\epsilon_i \sim N(0,1)$.

We consider two waves of snowball sampling with three different referral schemes.  The first referral scheme involves 3 randomly chosen seed nodes and at each step, all neighbors of nodes in the previous wave are included.  In the second referral scheme, we consider 10 seed nodes and 10 referrals for each node, where referrals are given by the 10 largest elements in each row of $P$ similar to the data generating process in (\ref{eq:fixed_degree_exchangeable}).  For the third referral scheme, we consider 20 random seeds with 5 referrals for each participating node.  For each of these schemes, we also study the union of 2-hop neighborhoods.  

We consider two fitted models.  The first model $\hat{\mu}_1$ is a smoothing spline fit only using the covariate $X$.  The second model $\hat{\mu}_2$ is a linear model with covariates including $X$ and estimates for the random dot product graph embedding (see for example, \citet{doi:10.1080/01621459.2012.699795} or Example 2 of \citet{network-conformal}).  For schemes 2 and 3, we construct the embeddings using the (induced) adjacency matrix rather than the referral matrix.   Note that both models are misspecified, but our theory still predicts finite sample validity for both. 
Moreover, it is not clear what population quantity the RDPG embeddings are estimating in snowball samples, but these statistics do satisfy Assumption \ref{assumption-permutation-invariance} and therefore, we expect conformal prediction to retain its finite sample validity.  For simplicity, for both models we use a standard regression conformity score of the form $|y- \hat{\mu}(x,z)|$, with $\alpha =0.1$.

\begin{table}[h]
 \label{table-snowball-sims}
 \centering
\begin{tabular}{ccccccc}
\multicolumn{1}{l}{}           & \multicolumn{1}{l}{}        & \multicolumn{2}{c}{Smoothing Spline}                        &                       & \multicolumn{2}{c}{Linear Regression}                      \\ \cline{3-4} \cline{6-7} 
                               & \multicolumn{1}{c|}{}       & \multicolumn{1}{c|}{Coverage} & \multicolumn{1}{c|}{Width}  & \multicolumn{1}{c|}{} & \multicolumn{1}{c|}{Coverage} & \multicolumn{1}{c|}{Width} \\ \cline{1-4} \cline{6-7} 
\multicolumn{1}{|c|}{Scheme 1} & \multicolumn{1}{c|}{Wave 1} & \multicolumn{1}{c|}{0.896}    & \multicolumn{1}{c|}{5.819}  & \multicolumn{1}{c|}{} & \multicolumn{1}{c|}{0.890}    & \multicolumn{1}{c|}{5.197} \\
\multicolumn{1}{|c|}{}         & \multicolumn{1}{c|}{Wave 2} & \multicolumn{1}{c|}{0.876}    & \multicolumn{1}{c|}{6.817}  & \multicolumn{1}{c|}{} & \multicolumn{1}{c|}{0.908}    & \multicolumn{1}{c|}{6.237} \\
\multicolumn{1}{|c|}{}         & \multicolumn{1}{c|}{Hop 2}  & \multicolumn{1}{c|}{0.906}    & \multicolumn{1}{c|}{6.544}  & \multicolumn{1}{c|}{} & \multicolumn{1}{c|}{0.906}    & \multicolumn{1}{c|}{5.846} \\ \cline{1-4} \cline{6-7} 
\multicolumn{1}{|c|}{Scheme 2} & \multicolumn{1}{c|}{Wave 1} & \multicolumn{1}{c|}{0.916}    & \multicolumn{1}{c|}{10.617} & \multicolumn{1}{c|}{} & \multicolumn{1}{c|}{0.906}    & \multicolumn{1}{c|}{6.006} \\
\multicolumn{1}{|c|}{}         & \multicolumn{1}{c|}{Wave 2} & \multicolumn{1}{c|}{0.917}    & \multicolumn{1}{c|}{6.607}  & \multicolumn{1}{c|}{} & \multicolumn{1}{c|}{0.931}    & \multicolumn{1}{c|}{4.477} \\
\multicolumn{1}{|c|}{}         & \multicolumn{1}{c|}{Hop 2}  & \multicolumn{1}{c|}{0.917}    & \multicolumn{1}{c|}{7.552}  & \multicolumn{1}{c|}{} & \multicolumn{1}{c|}{0.927}    & \multicolumn{1}{c|}{4.597} \\ \cline{1-4} \cline{6-7} 
\multicolumn{1}{|c|}{Scheme 3} & \multicolumn{1}{c|}{Wave 1} & \multicolumn{1}{c|}{0.916}    & \multicolumn{1}{c|}{6.982}  & \multicolumn{1}{c|}{} & \multicolumn{1}{c|}{0.928}    & \multicolumn{1}{c|}{5.540} \\
\multicolumn{1}{|c|}{}         & \multicolumn{1}{c|}{Wave 2} & \multicolumn{1}{c|}{0.922}    & \multicolumn{1}{c|}{4.545}  & \multicolumn{1}{c|}{} & \multicolumn{1}{c|}{0.902}    & \multicolumn{1}{c|}{4.286} \\
\multicolumn{1}{|c|}{}         & \multicolumn{1}{c|}{Hop 2}  & \multicolumn{1}{c|}{0.926}    & \multicolumn{1}{c|}{5.369}  & \multicolumn{1}{c|}{} & \multicolumn{1}{c|}{0.904}    & \multicolumn{1}{c|}{4.530} \\ \cline{1-4} \cline{6-7} 
\end{tabular}
\caption{Average coverage and width for snowball sampling under the spatial autoregressive model computed over 500 runs for $\alpha=0.1$}
\end{table}

We present our results in Table 1.  Since the non-conformity scores are continuous, we would expect tight concentration of the coverage around $0.9$, which we see for all models\footnote{It should be noted that there were 5 runs of scheme 2 that resulted in too small of a sample size for conformal prediction in the second wave; these cases were removed from our study. In any case, coverage would still be close to 0.9 if these cases were treated as ones where conformal prediction did not cover the truth.}. We also see that prediction intervals involving the linear model have  narrower width overall compared to their smoothing spline counterparts, which is also not surprising since we included more relevant covariates in the linear model.  It should be noted that the second wave of scheme 1 has a substantially larger number of nodes compared to other schemes since all neighbors are recruited in this scheme.  However, perhaps somewhat surprisingly, the average width associated with the second wave of scheme 1 is larger than the other schemes.  While it is not clear how this phenomenon generalizes to other settings, it appears that snowball sampling with fixed $r$ may lead to tighter prediction intervals than schemes involving all neighbors if members of a given wave are in some sense quite similar and there is substantially less variability within the referral group compared to a larger group of neighbors.  A similar effect is present when comparing the average widths for hop 2 to those of wave 2 for schemes 2 and 3.  While the second wave of scheme 3 appeared to produce the prediction intervals with the narrowest width in our simulation study, the tradeoff between breadth and depth for conformal prediction with snowball sampling merits further investigation.                    
\subsection{Random Walk}
We now consider a simulation study in which only 100 nodes are traversed by a random walk on a graph with 2000 nodes.  For the regression component of the model, let
\begin{align*}
Y = 3 + 8 X_{1} +4 \sin(4 \pi X_{2}) +3X_3  + \epsilon,
\end{align*}
where $\epsilon \sim N(0,1)$ and $(X_1,X_2, X_3) \sim N(\mu, \Sigma)$ with:

\begin{align*}
\mu = (1,3,0) , \ \
\Sigma = \begin{pmatrix}
1 & 0.6 & 0.3 \\ 
0.6 & 4 & \text{-} 0.4 \\ 
0.3 & \text{-} 0.4 & 1
\end{pmatrix}.
\end{align*}

For the network component of the model, we consider a sparsity level $\nu_n = 0.1$ and a variant of the Gaussian latent space model:
\begin{align*}
A_{ij} \sim \mathrm{Bernoulli}(\nu_n g(X_{3i}, X_{3j})),
\end{align*}
where:
\begin{align*}
g(x,y) = 0.9\exp(-(x-y)^2/4) +0.1.
\end{align*}
For the fitted model, we implement linear regression with $X_1$ and $X_2$ as the covariates; we treat $X_3$ as unobserved. Out of the 100 sampled nodes, 50 will be used to train a regression model.  We use the non-conformity score $|y-\hat{\mu}_n(x_1,x_2)|$ with $\alpha = 0.2$.  We compare the coverage and width of our weighted conformal prediction procedure to an analogous procedure constructed on 100 uniformly sampled nodes over 500 simulation runs.

\begin{table}[h]
\centering
\begin{tabular}{c|c|c|}
\cline{2-3}
                                     & Coverage & Width  \\ \hline
\multicolumn{1}{|c|}{Random walk}    & 0.838    & 10.996 \\ \hline
\multicolumn{1}{|c|}{Uniform sample} & 0.810    & 10.790 \\ \hline
\end{tabular}
\caption{Average coverage and width for random walk sampling computed over 500 runs for $\alpha=0.2$}
\end{table}

We present our results in Table 2.  As our theory suggests, the weighted conformal prediction procedure rapidly approaches the target coverage level and already has the desired coverage properties at $m=50$, at least in our one simulation example.  It turns out that the difference in width compared to the uniform sampling approach is significant at the $0.05$ level, but nonetheless it is reassuring that the random walk approach provides prediction sets with widths that are not alarmingly different even with a relatively small sample.

\subsection{Real Data Example: Predicting Twitch Views}
We now examine the performance of conformal prediction on the Twitch gamers dataset  \citep{rozemberczki2021twitch}, which consists of approximately 168,000 users from the live streaming service Twitch.  The users in the dataset come from the largest connected component of an undirected social network, which is also provided in the dataset.  We consider the task of constructing prediction intervals for views based on both conventional covariates and network covariates constructed from the social network.  Due to the massive size of the network, dealing with subnetworks is advantageous from a computational perspective.  We evaluate the performance of conformal prediction on both ego networks and random walks on the graph.

To quantify the performance of conformal prediction on ego networks, we randomly select 5 ego networks with at least 4000 neighbors.  For each ego dataset, we leave out 500 observations as a test set. The remaining observations are split into two equal-sized folds.  On the training set, we fit a random forest model where the conventional covariates are broadcasting language (binarized to English or not), presence of explicit content, affiliate status, and account lifetime. For the network covariates, we include degrees and a dimension 5 RDPG embedding, both computed on the ego network.  We again use the non-conformity score $|y - \hat{\mu}_n(x,z)|$ and set $\alpha =0.1$. 

\begin{table}[h]
\centering
\begin{tabular}{|c|c|c|}
\hline
Ego ID & Ego size & Coverage \\ \hline
10097  & 5003     & 0.904    \\ \hline
145505 & 5462     & 0.890    \\ \hline
160608 & 4843     & 0.906    \\ \hline
14649  & 4162     & 0.896    \\ \hline
143081 & 4998     & 0.918    \\ \hline
\end{tabular}
\caption{Results for conformal prediction on 5 ego networks with $\alpha=0.1$}
\end{table}

Our results for ego networks are presented in Table 3. Our results suggest that joint exchangeability is a reasonable assumption for this dataset since coverage on the test set is close the nominal level.  While our non-conformity score could use improvements in the sense that the lower bound of the prediction interval can be negative for smaller predicted view counts and the fitted model could be improved, our results here are quite encouraging regarding the applicability of conformal prediction to real-world ego networks.   

For the random walk, we consider $m=1000,2500,5000$.  For each $m$, we implement split conformal prediction with an equal split size of $m$.  For the fitted model, we consider both a linear model and a random forest, with the same covariate set as the ego network, but without the network covariates.  For the starting point of the random walk, we randomly choose one of the nodes with degree greater than 4000.  We study average coverage over 100 initializations of the random walk, where for each initialization we generate 500 independently drawn test points.  We use the non-conformity score $|y - \hat{\mu}_n(x,z)|$ and set $\alpha =0.1$.

We tabulate our results in Table 4.  It appears that our random walk approach under-covers for both models.  We believe that this largely has to do with the degree distribution of the network.  Roughly 2\% of all nodes in the dataset have only 1 neighbor.
Moreover, the degree distribution appears to be heavy-tailed, with a mean degree of 80.87, a median degree of 32, and a maximum degree of 35,279.  
Therefore, it is  questionable whether a lower and upper-bounded graphon is a reasonable approximation of the underlying data generating process.  It is quite possible that the mixing rate is substantially slower than what our theory based on these assumptions predicts.  While finite sample validity holds only within ego and snowball samples, the regularity conditions in these cases are much weaker than those for the random walk. Thus, it is not surprising that the former can have better coverage properties on the same real-world network compared to the random walk approach. 

\begin{table}[]
\centering
\begin{tabular}{cccc}
                                        & \multicolumn{3}{c}{Coverage}                                                             \\ \cline{2-4} 
\multicolumn{1}{c|}{}                   & \multicolumn{1}{c|}{m=1000} & \multicolumn{1}{c|}{m=2500} & \multicolumn{1}{c|}{m=5000} \\ \hline
\multicolumn{1}{|c|}{Linear Regression} & \multicolumn{1}{c|}{0.820}  & \multicolumn{1}{c|}{0.811}  & \multicolumn{1}{c|}{0.802}   \\ \hline
\multicolumn{1}{|c|}{Random Forest}     & \multicolumn{1}{c|}{0.818}  & \multicolumn{1}{c|}{0.800}  & \multicolumn{1}{c|}{0.784}   \\ \hline
\end{tabular}
\caption{Average coverage for weighted conformal prediction with data sampled by a random walk with $\alpha = 0.1$}
\end{table}

\section{Discussion}
In this work, we study the validity of conformal prediction under various sampling mechanisms.  We argue that, if the selection mechanism satisfies a certain invariance property, then conformal prediction retains its finite sample coverage guarantee within the (potentially) non-representative sample.  One crucial question is: how robust are these conditional coverage guarantees when the selection event deviates from the hypothetical selector? As a simple example, if one is interested in constructing a prediction set for the test point $Y_{n+1},$ one may examine the neighbors of the test point and then choose the largest ego network that contains the test point. This selection event is obviously different from what is proposed in the main paper, but is a very natural selection rule to consider in practice.  In addition, it would also be very interesting to study the properties of conformal prediction when the observed nodes come from other Markov processes on the graph, particularly in settings with hubs or disconnected components, which our current theory does not allow.  Finally, it should be noted that the framework developed in this paper applies not only to network-assisted regression problems, but also to link prediction problems as well.    
\section*{Acknowledgements}
The author would like to thank Elizaveta Levina, Ji Zhu, and Purnamrita Sarkar for helpful discussions. The author would also like to acknowledge support from Washington University's TRIADS seed grant program. 

\begin{appendix}
\section{Proofs for Section \ref{sec:conditonal-validity}}
\textit{Proof of Theorem 1.} \ \ 
After conditioning on the selection event $\mathcal{S}(V) = S$, we may consider the indices of $\mathcal{V}$ fixed.  Since the CDF characterizes the distribution, it suffices to consider a collection of hyperrectangles $\{\mathcal{B}_{ij}\}_{i,j \in S}$.  For an arbitrary collection of hyperrectangles, we may write the conditional probability after permutation by $\sigma \in \Sigma_S$ as:  
\begin{align*}
P\left( \bigcap_{i,j \in S} \mathcal{V}_{\sigma(i) \sigma(j)} \in \mathcal{B}_{ij}  \ \biggr\rvert \  \mathcal{S}(V) = S \right) &=  \frac{P\left( \bigcap_{i,j \in S } V_{\sigma(i) \sigma(j)} \in \mathcal{B}_{ij} \  \cap \ \{\mathcal{S}(V) = S  \} \right)}{P(\mathcal{S}(V) = S )}
\\ &= \frac{P\left( \bigcap_{i,j \in S} V_{\sigma(i) \sigma(j)} \in \mathcal{B}_{ij} \  \cap \ \{\mathcal{S}(V^\sigma) = S  \} \right)}{P(\mathcal{S}(V) = S )}
\\ &= \frac{P\left(\bigcap_{\sigma(i),\sigma(j) \in S} V_{ij} \in \mathcal{B}_{ij} \  \cap \ \left\{V^\sigma \in \mathcal{S}^{-1}\{S\} \right\}  \right)}{P(\mathcal{S}(V) = S )}
\\ &= \frac{P\left(\bigcap_{i,j \in S} V_{ij} \in \mathcal{B}_{ij} \  \cap \ \{\mathcal{S}(V) = S  \} \right)}{P(\mathcal{S}(V) = S )}
\\ &= P\left(\bigcap_{i,j \in S}  \mathcal{V}_{ij} \in \mathcal{B}_{ij}  \ \biggr\rvert \  \mathcal{S}(V) = S \right).
\end{align*}
The result follows. \qed \\ 

To prove Theorem \ref{theorem-conditional-conformal-theorem}, we now recall the following result from \citep{network-conformal}, which is an adaptation of \citep{kuchibhotla2021exchangeability,commenges-transformations,dean-verducci}:
\begin{proposition}
\label{theorem-exchangeability-transformations}
 Let $X$ be a random variable taking values in $\mathcal{X}$ and suppose that $Y = H(X)$ for some $H: \mathcal{X} \mapsto \mathcal{Y}$.  Further suppose that for some collection of functions $\mathcal{F}$ $\mathcal{X} \mapsto \mathcal{X}$,
 \begin{align*}
 \label{eq-invariance-assumption}
  f(X) \stackrel{d}{=} X \ \ \forall \ f \in \mathcal{F}.  
 \end{align*}
Furthermore, let $\mathcal{G}$ be a collection of functions $\mathcal{Y} \mapsto \mathcal{Y}$ and suppose that for any $g \in \mathcal{G}$, there exists a corresponding $f \in \mathcal{F}$ such that,
\begin{align*}
g(H(X)) = H(f(X)) \ \  a.s.    
\end{align*}
Then,
\begin{align*}
g(Y) \stackrel{d}{=} Y \ \ \forall \ g \in \mathcal{G}.   
\end{align*}
\end{proposition}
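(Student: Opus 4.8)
The plan is to prove the claim directly, since it is essentially a statement about pushforward measures. Fix an arbitrary $g \in \mathcal{G}$. By hypothesis there is a corresponding $f \in \mathcal{F}$ with $g(H(X)) = H(f(X))$ almost surely; because $Y = H(X)$, this reads $g(Y) = H(f(X))$ almost surely, and two random variables that agree almost surely have the same distribution, so $g(Y) \stackrel{d}{=} H(f(X))$.

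Next I would transport the invariance of $X$ through $H$: since $f(X) \stackrel{d}{=} X$ and $H$ is measurable, applying $H$ preserves equality in law, giving $H(f(X)) \stackrel{d}{=} H(X) = Y$. Chaining the two relations yields $g(Y) \stackrel{d}{=} Y$, and since $g$ was arbitrary this is the desired conclusion.

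There is no genuine obstacle here; the only points to record carefully are that an almost-sure equality of random variables implies equality in distribution (immediate from the definition of the induced law) and that a measurable map sends $\stackrel{d}{=}$ to $\stackrel{d}{=}$ (the map $\mu \mapsto \mu \circ H^{-1}$ on laws), which amounts to reading "$Y = H(X)$ for some $H$" as "$H$ measurable". The substance of the proposition lies in how it will be applied to Theorem \ref{theorem-conditional-conformal-theorem}: one takes $X$ to be $\mathcal{V}$ under the conditional law given $\mathcal{S} = S$, so $\mathcal{F}$ is the action of $\Sigma_S$ and $f(X) \stackrel{d}{=} X$ is exactly Theorem \ref{theorem-conditional-exchangeability}; one lets $H$ be the map that produces the triplets and the non-conformity scores through $\zeta^{(k)}$ and $s(\cdot; \mathcal{D}_1)$, and lets $\mathcal{G}$ be the induced permutations of the scores. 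Assumption \ref{assumption-permutation-invariance} then supplies the commuting identity $g \circ H = H \circ f$, so the proposition yields conditional exchangeability of $(s(Y_i,X_i,\hat{Z}_i;\mathcal{D}_1))_{i \in D_2}$ given $\mathcal{S} = S$, and the coverage bound follows from the usual split-conformal quantile inequality applied conditionally on the selection event.
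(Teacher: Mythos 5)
Your proof is correct: the two observations you isolate (almost-sure equality implies equality in law, and a measurable map $H$ pushes $f(X) \stackrel{d}{=} X$ forward to $H(f(X)) \stackrel{d}{=} H(X)$) chain together exactly as you say, and this is the standard argument for this statement. The paper itself gives no proof of Proposition \ref{theorem-exchangeability-transformations}, recalling it from cited prior work, so there is nothing to compare beyond noting that your argument is the natural one and that your sketch of how it feeds into Theorem \ref{theorem-conditional-conformal-theorem} matches the paper's use of it.
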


\noindent \textit{Proof of Theorem \ref{theorem-conditional-conformal-theorem}.} \ \ 
We begin by arguing that $(Y_i, X_i, \hat{Z}_i)_{i \in \mathcal{S}}$ is exchangeable conditional on $\mathcal{S} = S$ for each $S$ such that $P(\mathcal{S} = S) > 0$. By Proposition \ref{theorem-exchangeability-transformations}, it suffices to show that for $H$ that maps $\mathcal{V}$ to $(Y_i, X_i, \hat{Z}_i)_{i \in \mathcal{S}}$, permuting the rows and columns of $\mathcal{V}$ corresponds to permuting $(Y_i, X_i, \hat{Z}_i)_{i \in \mathcal{S}}$ accordingly.  This is guaranteed by Assumption \ref{assumption-permutation-invariance}.  Now, we may consider permutations acting only on $D_2$ and invoking Proposition \ref{theorem-exchangeability-transformations} again, we have that $(S_i)_{i \in D_2}$ is exchangeable.  Finally, (\ref{eq-conditional-coverage-guarantee}) follows from properties of quantiles of exchangeable random variables (see for example, Lemma 1 of \citet{NEURIPS2019_8fb21ee7}).  \qed \\

\noindent \textit{Proof of Proposition \ref{proposition-referral-exchangeable}.}  \ \  
Since it is clear that $(W_{ij})_{1 \leq i,j, \leq n}$ is jointly exchangeable, by Proposition \ref{theorem-exchangeability-transformations}, it suffices to show that, for any permutation $\sigma$, permuting the rows and columns of $W$ and then applying the row-wise rank function corresponds to permuting the rows and columns of $A$ accordingly.

Let $j_1, \ldots, j_r$ correspond to the column indices of the $r$ largest values of $W_{i1}, \ldots, W_{in}.$  Applying $\sigma$, we see that $\sigma(j_1), \ldots , \sigma(j_r)$ must be the indices of the $r$ largest values of $(W_{\sigma(i) \sigma(1)}, \ldots, W_{\sigma(i)\sigma(n)}).$  Thus, permuting the rows and columns of $W$ and then applying the row-wise rank function corresponds to permuting the rows and columns of $A$. The result follows. \qed 
\\ \\
\noindent \textit{Proof of Proposition \ref{proposition-snowball-invariant}.} \ \ For notational simplicity, we will view the selection rule as a function of $A$.  For $a = (a_{ij})_{1 \leq i,j \leq n}$, it suffices to show that:
\begin{equation}
\label{eq-wave-to-show}
\{ a \ | \ a \in \{ \mathcal{W}(M_0, k) = S \} \} = \{ a \ | \ a^\sigma \in \{ \mathcal{W}(M_0, k) = S \}\}.
\end{equation}
We will first show $\{ a \in \{\mathcal{W}(M_0, k) = S\} \} \subseteq \{ a^\sigma \in \{\mathcal{W}(M_0, k) = S \} \}$.  To this end, pick an arbitrary configuration $a \in \{ a \in \{\mathcal{W}(M_0, k) = S\} \}$. Of course, if the set is empty, we do not need to verify the property, so we consider the case in which at least one such element exists. We will argue that it must also be the case that $a \in \{ a^\sigma \in  \{ \mathcal{W}(M_0, k) = S\} \}$.
First, let:
\begin{align*}
\mathcal{B}_k =  \left(\bigcup_{0 \leq t \leq k-1}\mathcal{W}(M_0,t)\right)^c.
\end{align*}
 Notice that when applying any permutation in $\Sigma_S$ to the rows and columns of $a$, edges determining membership in previous waves are unchanged by construction. Thus, the potential candidates for the $k$th wave, given by $\mathcal{B}_k$, remains unchanged by permutation.
 Now, applying $\sigma$, we see that, since $ a \in \{\mathcal{W}(M_0, k) = S \}$ and $\sigma$ restricted to $S$ is a bijection:
 \begin{align*}
 \bigcup_{i \in \mathcal{W}(M_0,k-1) } \{j \in \mathcal{B}_k \ | \  A_{ij}^\sigma =1 \} = S.
 \end{align*}
 Thus the first inclusion is proved. To see that $\{  a^\sigma \in \{\mathcal{W}(M_0, k) = S\} \}  \subseteq  \{ a \in \{\mathcal{W}(M_0, k) = S\} \}$, we can again pick an arbitrary element $a \in \{ a^\sigma \in \{\mathcal{W}(M_0, k) = S\} \}$ and then argue that $a^{\sigma'} \in \{\mathcal{W}(M_0, k) = S \}$ using the same reasoning, where $\sigma' = \sigma^{-1}$.  Since $\sigma \in \Sigma_S$ and $\Sigma_S$ were arbitrary, the result follows.
 \qed
\\ \\ 
\noindent \textit{Proof of Proposition \ref{proposition-k-hop-invariant}.} \ \ 
Let $d(u,v)$ denote the shortest path (geodisic) distance\footnote{For directed graphs, $d$ is a quasi-metric since $d(u,v)$ need not equal $d(v,u)$. } between nodes $u$ and $v$. Furthermore, let $d^\sigma(u,v)$ denote the distance between $u$ and $v$ for the relabeled adjacency matrix $A^\sigma$.    To show the analogue of (\ref{eq-wave-to-show}), it suffices to establish that, for any $S$ that does not contain elements belonging to $M_0$ and any $\sigma \in \Sigma_S$,
\begin{align*}
& \min_{i \in M_0} d(i,j) \leq k \ \ \forall \ j \in S \ \text{ and }  \ \min_{i \in M_0} d(i,j) > k \ \ \forall \ j \in S^c \setminus M_0
\\ \iff & \min_{i \in M_0} d^\sigma(i,j) \leq k \ \ \forall \ j \in S \ \text{ and }  \ \min_{i \in M_0} d^\sigma(i,j) > k \ \ \forall \ j \in S^c \setminus M_0.   
\end{align*}
We will start by showing the $\implies$ direction.  Pick $j \in S$ for the permuted graph. Since $\sigma$ restricted to $S$ is a bijection, it follows that $j' = \sigma(j)$ for some $j' \in S$.  Now, by assumption, $\min_{i \in M_0} d(i,j') \leq k$; thus $\min_{i \in M_0} d^\sigma(i,j) \leq k$  follows since the length of the shortest path is unchanged by relabeling vertices.  For $j \in S^c \setminus M_0$, we have that $\sigma(j) = j'$ and again, since the length of the shortest path is unaffected, we have $\min_{i \in M_0} d^\sigma(i,j) > k$.      

For $\impliedby$, pick $j \in S$ from the original graph.  We see that $j =\sigma(j') \in S$ for some $j' \in S$ and we can repeat the same reasoning as above. The case where $j \in S^c \setminus M_0$ is analogous. The result follows.   
\qed



\section{Properties of $\beta$-mixing Coefficients}
To prove Theorem \ref{thm-conformal-prediction-random-walk}, we need to characterize the dependence properties of the random walk. The notion of mixing provides a convenient framework; see for example, \citet{10.1214/154957805100000104} or \citet{doukhan-mixing} for an overview on the relationship between Markov chains and various notions of mixing.  In particular, we focus on the notion of $\beta$-mixing or absolute regularity to invoke an empirical process result due to \citet{10.1214/aop/1176988849}.  We start with the notion of $\beta$-dependence, which may then be used to construct a corresponding dependence coefficient.   In what follows let $\mathcal{A}$ and $\mathcal{B}$ be $\sigma$-fields.   
\begin{definition}[$\beta$-dependence measure]
\begin{align*}
\beta(\mathcal{A}, \mathcal{B})  = \frac{1}{2} \sup \sum_{i=1}^I \sum_{j=1}^J |P(A_i \cap B_j) - P(A_i) P(B_j)|  
\end{align*}
where the supremum is taken over all finite partitions in $\mathcal{A}$ and $\mathcal{B}$. 
\end{definition}
Now, we define the $\beta$-mixing coefficient, which measures the decay in dependence as a function of gap between the past and the future.  While $\beta$-mixing coefficients are more commonly defined for a two-sided stochastic process, the following finite form will be sufficient for our purposes.  

\begin{definition}[$\beta$-mixing coefficient]
\begin{align*}
\beta_t(U_0^n) = \sup_{0 \leq \tau \leq n-t  } \beta(\sigma(U_0, \ldots, U_{\tau}), \sigma(U_{\tau+t}, \ldots, U_n)) 
\end{align*}
\end{definition}
For a stationary Markov chain, we have:
\begin{equation}
\label{eq-markov-chain-dependence-bound}
\beta_t(U_0^n) \leq  \sup_{i} \|P^t(\cdot, i) - \pi(\cdot)    \|_{TV}.
\end{equation}
See for example, \citet{meitz_saikkonen_2021} or \citet{https://doi.org/10.1111/j.1467-9892.2005.00412.x}.


\section{Proofs for Section \ref{sec:asymptotic-validity-random-walk}}
We start by proving a lemma that establishes that, under appropriate conditions, the weighted empirical CDF of the non-conformity scores computed on the random walk will, with high probability, be close to the empirical CDF of the non-conformity scores computed on all $n$ nodes.

We now prepare some additional notation.  In what follows, let $\tilde{S}_i = s(\tilde{Y}_i,\tilde{X}_i;\widetilde{\mathcal{D}}_1)$, $\tilde{S}_i^* = s(\tilde{Y}_i,\tilde{X}_i)$, $\tilde{S}' =s(Y',X';\widetilde{\mathcal{D}}_1)$, $S' =s_*(Y',X')$,   and $S_i = s_*(Y_i, X_i)$.  Furthermore, for notational convenience, let $\tilde{\nu}_i = \nu(\mathcal{X}_{m+i})$ and $\nu_i = \nu(i)$.  

\begin{lemma}
\label{lemma-cdf-approximation-weighted}
Under the conditions of Theorem \ref{thm-conformal-prediction-random-walk}, 
\begin{align*}
& \left| \frac{1}{m}\sum_{i=1}^m \tilde{\nu}_i \mathbbm{1}(\tilde{S}_i \leq \tilde{S}') - \frac{1}{n}\sum_{j=1}^n \mathbbm{1}(S_j \leq S') \right| = o_P(1).
\end{align*}
\end{lemma}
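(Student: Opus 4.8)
The plan is to interpolate between the two empirical quantities through a sequence of intermediate processes, controlling each difference separately. Write the target difference as a telescoping sum: first replace the data-dependent scores $\tilde S_i = s(\tilde Y_i,\tilde X_i;\widetilde{\mathcal D}_1)$ and $\tilde S' = s(Y',X';\widetilde{\mathcal D}_1)$ by their population surrogates $\tilde S_i^* = s_*(\tilde Y_i,\tilde X_i)$ and $S' = s_*(Y',X')$; then replace the weighted random-walk empirical CDF of the $\tilde S_i^*$ by the (deterministic-kernel, unweighted) empirical CDF $\frac{1}{n}\sum_{j=1}^n \mathbbm 1(S_j \le S')$ computed on the full population. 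Throughout, one conditions on $V$ (equivalently on the whole superpopulation array and the graph) where convenient, so that $S'$ and all the $S_j$ are fixed and the only remaining randomness is that of the random walk and the neighbor choices.

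The first step — passing from $\tilde S_i$ to $\tilde S_i^*$ and from $\tilde S'$ to $S'$ — is handled by the Score Consistency assumption (A1) together with the Score Density assumption (A2). The subtlety is that the indicator $\mathbbm 1(\tilde S_i \le \tilde S')$ can differ from $\mathbbm 1(\tilde S_i^* \le S')$ even when both score approximations are within $\epsilon$, but only if $\tilde S_i^*$ lies in an interval of width $O(\epsilon)$ around $S'$; by (A2) this event has probability $O(K\epsilon)$, and the contribution is further reweighted by $\tilde\nu_i$. Here one needs a uniform bound on the weights $\tilde\nu_i = 2|E|/(nD_{\mathcal X_{m+i}})$: under the sparse graphon assumptions $0<c\le w\le C$ and $\rho_n = \omega(\log^{3/2} n/n)$, standard degree-concentration arguments give $D_j \asymp n\rho_n$ uniformly over all nodes on a high-probability event, so $\tilde\nu_i$ is bounded by an absolute constant there. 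On that event, $\frac1m\sum_i \tilde\nu_i \mathbbm 1(\text{bad}) \lesssim \frac1m\sum_i \tilde\nu_i \cdot(\text{small})$, and taking expectations and using (A1), (A2), then letting $\epsilon\to 0$ slowly, shows this piece is $o_P(1)$.

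The second and main step is to show that the weighted random-walk average $\frac1m\sum_{i=1}^m \tilde\nu_i \mathbbm 1(\tilde S_i^* \le x)$ concentrates, uniformly in $x$, around its "intended" target $\frac1n\sum_{j=1}^n \mathbbm 1(S_j \le x)$. The weight $\nu(j) = 1/(n\pi(j))$ is exactly the importance-sampling correction that makes the conditional (given $V$) expectation of $\nu(\mathcal X_k)\mathbbm 1(S_{\mathcal X_k}\le x)$ under the stationary distribution equal to $\frac1n\sum_j \mathbbm 1(S_j\le x)$. So the task reduces to (i) a mixing/ergodic-averaging bound showing the random-walk average is close to its stationary expectation, and (ii) handling the fact that the chain is not started from stationarity. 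For (i), I would invoke Theorem \ref{thm:mixing-time}: on a high-probability event the chain mixes geometrically with a rate $\gamma<1$ not depending on $n$ (up to the $K\sqrt n$ prefactor), which via the $\beta$-mixing bound (\ref{eq-markov-chain-dependence-bound}) gives $\beta_t = O(\sqrt n\,\gamma^t)$, summable once $t \gtrsim \log n$; then a variance computation (or a blocking argument à la \citet{10.1214/aop/1176988849}) for the weighted, bounded functional over $m=\omega(\log n)$ steps gives an $o_P(1)$ fluctuation — the burn-in of the first $m$ steps $\tilde D_1$ already brings the chain exponentially close to $\pi$, and the prefactor $K\sqrt n$ is absorbed since $m = \omega(\log n)$ makes $\sqrt n\,\gamma^m \to 0$ when $\rho_n$ is polynomial, but more care (a polynomial lower bound on $\rho_n$, or simply $m=\omega(\log n)$ with the stated $\gamma$) is needed; this is where the argument of \citet{DBLP:conf/colt/ChernozhukovWZ18} and \citet{Chernozhukove2107794118} is adapted. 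For (ii), the starting node chosen by $f(V,\eta)$ is arbitrary but the burn-in period $\tilde D_1$ of length $m$ renders the distribution of $\mathcal X_{m+1}$ within total variation $O(\sqrt n\,\gamma^m) = o(1)$ of $\pi$, uniformly over starting points, again by Theorem \ref{thm:mixing-time}.

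The main obstacle is the second step, specifically controlling the weighted empirical process uniformly in the threshold $x$ while the underlying Markov kernel $\widetilde P_n$ is itself random (depending on the realized graph topology) and the weights $\tilde\nu_i$ are random and a priori unbounded. The resolution is to do everything on the high-probability "good graph" event supplied by Theorem \ref{thm:mixing-time} and the degree-concentration bounds, on which simultaneously (a) the weights are uniformly bounded, (b) the mixing rate is uniformly geometric, and (c) the degrees are all of order $n\rho_n$; off that event one simply concedes. Once restricted to the good event, the functional $\mathbbm 1(S_j \le x)$ ranges over a VC class of index 1, so a single uniform-in-$x$ bound follows from the bracketing/blocking empirical-process theory for $\beta$-mixing sequences, and combining with the first step completes the proof.
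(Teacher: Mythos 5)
Your proposal is correct and follows essentially the same route as the paper's proof: the same telescoping via (A1)/(A2) with a uniform weight bound, restriction to the high-probability event supplied by Theorem \ref{thm:mixing-time} and degree concentration, the importance-sampling identity for $\nu(j)=1/(n\pi(j))$ conditional on $V$, a burn-in/total-variation argument (using the length-$m$ gap) to reduce to the stationary chain, and a uniform-in-threshold bound from $\beta$-mixing empirical process theory over a VC class (the paper handles the bounded weights by composing the indicator class with a truncated weight function to preserve the VC-subgraph property). Your hedge about $\sqrt{n}\,\gamma^m$ is unnecessary: since $\gamma<1$ is fixed, $m=\omega(\log n)$ already gives $\sqrt{n}\,\gamma^m\to 0$, exactly as used in the paper.
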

\begin{proof}
We derive an upper bound; the lower bound follows analogous reasoning. On the high probability set $\{|\tilde{S}' - S'| \leq \delta \}$, we have that:
\begin{align*}
\frac{1}{m}\sum_{i=1}^m \tilde{\nu}_i \mathbbm{1}(\tilde{S}_i \leq \tilde{S}') \leq \frac{1}{m}\sum_{i=1}^m \tilde{\nu}_i \mathbbm{1}(\tilde{S}_i^* \leq S'+2\delta) +\frac{1}{m}\sum_{i=1}^m \nu_{max} \mathbbm{1}(|\tilde{S}_i- \tilde{S}_i^*| > \delta),  
\end{align*}
where $\nu_{max} = \max_{1 \leq i \leq n} \nu_i$. Now, consider the bound:
\begin{align*}
& \ \frac{1}{m}\sum_{i=1}^m \tilde{\nu}_i \mathbbm{1}(\tilde{S}_i \leq \tilde{S}') - \frac{1}{n} \sum_{j=1}^n \mathbbm{1}(S_i \leq S')
\\ \leq & \ \sup_{t \in \mathbb{R}} \left| \frac{1}{m}\sum_{i=1}^m \tilde{\nu}_i \mathbbm{1}(\tilde{S}_i^* \leq t + 2\delta)
- \frac{1}{n} \sum_{j=1}^n \mathbbm{1}(S_j \leq t + 2\delta) \right|
\\ \ & \ + \sup_{t \in \mathbb{R}} \left| \frac{1}{n} \sum_{j=1}^n [\mathbbm{1}(S_j \leq t + 2\delta) - P(S_j \leq t + 2\delta )]\right|
\\ \ & \ + \sup_{t \in \mathbb{R}} \left| P(S' \leq t + 2\delta) - P(S' \leq t) \right| 
\\ \ & \ + \sup_{t \in \mathbb{R}} \left|  \frac{1}{n} \sum_{j=1}^n [\mathbbm{1}(S_j \leq t) - P(S_j \leq t)] \right| + \frac{1}{m}\sum_{i=1}^m \nu_{max} \mathbbm{1}(|\tilde{S}_i^*- \tilde{S}_i| > \delta)
\\ = & \ I + II + III + IV + V, \text{ say. }
\end{align*}
Now for some $\mathcal{C}_n$ such that $P(\mathcal{C}_n) \rightarrow 1$, we will show that:
\begin{align*}
\sup_{\omega \in \mathcal{C}_n} P\left( \sup_{t' \in \mathbb{R}} \left| \frac{1}{m}\sum_{i=1}^m \tilde{\nu}_i \mathbbm{1}(\tilde{S}_i^* \leq t') - \frac{1}{n} \sum_{j=1}^n \mathbbm{1}(S_j \leq t')  \right| > \epsilon \ \biggr\rvert \ V, \mathcal{X}_0 \right) \rightarrow 0. 
\end{align*}
By Theorem \ref{thm:mixing-time} and Lemma \ref{lemma-max-stationary-measure}, it follows that there exists $0 < K< \infty$ and $0 \leq \gamma <1$ and $ 0 < M <\infty$ such that $P(\mathcal{C}_n) \rightarrow 1$, where $\mathcal{C}_n$ is the set given by:
\begin{align*}
\mathcal{C}_n = \left\{ \forall \ t \geq 1, \ \max_{x_0 \in \{1, \ldots, n\}} \|\widetilde{P}_n^t(\ \cdot \ | \ \mathcal{X}_0 = x_0) - \pi(\cdot) \|_{TV} \leq K \sqrt{n} \gamma^t    \right\} \cap \left\{ \max_{1 \leq i \leq n} \frac{1}{n\pi(i)} \leq M \right\}
\end{align*}
Let $\mathcal{X}_0, \ldots, \mathcal{X}_{2m}$ denote the sequence of nodes drawn from the random walk. For each $\omega \in \mathcal{C}_n$, define the function:
$\phi: \{1, \dots, n\} \mapsto \{(\nu_1, S_1), \ldots, (\nu_n, S_n) \}$ as $\phi(i) = (\nu_i, S_i)$ and let $\tau_j = \phi(\mathcal{X}_j)$.  Furthermore, consider the following induced probability measure associated with $\tau$:
$\tilde{P}_n(\tau_j \in \cdot \ | \ \mathcal{X}_0 = x_0)$ and a corresponding measure initialized  by the stationary measure $\tilde{P}_n^{\pi}.$  
Since $\|P-Q\|_{TV} = \int_{\|f\|_\infty \leq 1} f \ d(P-Q)$ and the induced measure also satisfies a Markov property conditional on $\tau_m$ since $S_i$ are distinct almost surely, it is clear that, uniformly on the set $\mathcal{C}_n$:
\begin{equation}
\label{eq:dist-to-stationary}
\begin{split}
& \ \|\tilde{P}_n( (\tau_{m+1}, \ldots, \tau_{2m}) \in \cdot \ | \ \mathcal{X}_0 = x_0) - \tilde{P}_n^\pi( (\tau_{m+1}, \ldots, \tau_{2m}) \in \cdot \ ) \|_{TV}
\\ \leq & \  \|\tilde{P}_n(  \tau_{m+1} \in \cdot \ | \ \mathcal{X}_0 = x_0) - \tilde{P}_n^\pi( \tau_{m+1} \in \cdot \ ) \|_{TV}
\\ = & \ \sup_{\|f\|_\infty \leq 1} \int f(\phi(\mathcal{X}_m)) \  d[\tilde{P}_n^{m+1}( \cdot \ | \  \mathcal{X}_0 = x_0) - \pi(\cdot)]
\\  \leq & \ K \sqrt{n}\gamma^{m+1}.
\end{split}
\end{equation}

Moreover, uniformly on the set $\mathcal{C}_n$, by similar reasoning the $\beta$-mixing coefficient for random walk initialized with the stationary distribution satisfies: 
\begin{align*}
\ & \ \beta_t(\tau_0, \ldots, \tau_{2m})
\\ = & \ \beta_t(\phi(\mathcal{X}_0), \ldots, \phi(\mathcal{X}_{2m}))
\\ \leq & \ \beta_t(\mathcal{X}_0, \ldots, \mathcal{X}_{2m})
\\ \leq & \   K \sqrt{n}\gamma^t.
\end{align*}

We first approximate the asymptotically stationary Markov chain with the stationary measure.  For notational simplicity, in what follows we use the notation $P^\pi(\cdot)$ below.  By (\ref{eq:dist-to-stationary}), since there is a gap of length $m+1$ from $\mathcal{X}_0$, we have, uniformly in $\mathcal{C}_n,$ 
\begin{align*}
& \sup_{\omega \in \mathcal{C}_n} P\left( \sup_{t' \in \mathbb{R}} \left| \frac{1}{m}\sum_{i=1}^m \tilde{\nu}_i \mathbbm{1}(\tilde{S}_i^* \leq t') - \frac{1}{n} \sum_{j=1}^n \mathbbm{1}(S_j \leq t')  \right| > \epsilon \ \biggr\rvert \ V, \mathcal{X}_0 \right)
\\  \leq &  \sup_{\omega \in \mathcal{C}_n} P^\pi\left( \sup_{t' \in \mathbb{R}} \left| \frac{1}{m}\sum_{i=1}^m \tilde{\nu}_i \mathbbm{1}(\tilde{S}_i^* \leq t') - \frac{1}{n} \sum_{j=1}^n \mathbbm{1}(S_j \leq t')  \right| > \epsilon \ \biggr\rvert \ V \right) + K \sqrt{n} \gamma^{m+1}.
\end{align*}
Now that we have a stationary process, we will invoke an empirical process result for stationary $\beta$-mixing processes.  To this end, let $g(v) = v \mathbbm{1}(v \leq M) + M \mathbbm{1}(v > M)$ and define:
\begin{align*}
\mathcal{F} &= \left\{ \mathbbm{1}(x \leq t) \ | \ t \in \mathbb{R}  \right\}
\\ \mathcal{G} &= \left\{ f \cdot g \ | \  f \in \mathcal{F}   \right\}. 
\end{align*}
By well-known VC-subgraph class preservation properties (see, for example,  \citet{van-der-Vaart-Wellner-weak-conv} Lemma 2.6.18), the class $\mathcal{G}$ is a VC-subgraph class.  Moreover, the envelope is bounded, while $g$ is equal to $\nu_i$ on the set $\mathcal{C}_n$.  With the chosen weights, it is also clear that conditional on the graph, the expectation under $P^\pi$ is $\frac{1}{n} \sum_{i=1}^n \mathbbm{1}(S_i \leq t')$.  An inspection of the proof of Theorem 3.1 of  \citet{10.1214/aop/1176988849} reveals that the rate of convergence is uniform over processes upper-bounded with the same $\beta$-coefficient; thus, this term goes to zero under the condition $m = \omega(\log n)$.
Since $P(\mathcal{C}_n) \rightarrow 1$, $I \stackrel{P}{\rightarrow} 0$.  

Now, $II$ and $IV$ converge in probability to $0$ by, for example, the Dvoretzky-Kiefer-Wolfowitz inequality, $III$ can be made arbitrarily small by Assumption A2 and $V \stackrel{P}{\rightarrow} 0 $ by Assumption A1, Lemma \ref{lemma-max-stationary-measure}, and Markov inequality.
\end{proof}

\noindent \textit{Proof of Theorem \ref{thm-conformal-prediction-random-walk}.} \ \ Given Lemma \ref{lemma-cdf-approximation-weighted}, the proof of Theorem \ref{thm-conformal-prediction-random-walk} is immediate since $(S_1, \ldots, S_n, S')$ are i.i.d. and continuous.  We demonstrate the lower bound; the upper bound is analogous.  For any $\epsilon$, define the event:
\begin{align*}
\mathcal{E} = \left\{ \left| \frac{1}{m}\sum_{i=1}^m \tilde{\nu}_i \mathbbm{1}(\tilde{S}_i \leq \tilde{S}') - \frac{1}{n}\sum_{j=1}^n \mathbbm{1}(S_j \leq S') \right| \leq \epsilon \right\}. 
\end{align*}
Observe that, for $n$ large enough, 
\begin{align*}
P(Y' \in \widetilde{C}_m(X')) - (1-\alpha) &=  P\left(\frac{1}{m}\sum_{i=1}^m \tilde{\nu}_i \mathbbm{1}(\tilde{S}_i \leq S') \leq 1-\alpha  \right) - (1-\alpha) 
\\ & \geq P\left(\left\{\frac{1}{m}\sum_{i=1}^m \tilde{\nu}_i \mathbbm{1}(\tilde{S}_i \leq S') \leq 1-\alpha \right\} \ \cap \  \mathcal{E}  \right) - (1-\alpha)
\\ & \geq P\left(\frac{1}{n}\sum_{j=1}^n \mathbbm{1}(S_j \leq S') \leq 1-\alpha-\epsilon \right) - (1-\alpha) - \epsilon
\\ & \geq - 3\epsilon.
\end{align*}
\qed 
\begin{remark}
The above argument also works for the case where $(Y',X')$ is independently drawn from $(Y_1, X_1), \ldots, (Y_n, X_n)$ rather than being a new independent draw from the underlying distribution. 
\end{remark}

We now establish the closeness of the eigenvalues of $\mathscr{A}$ to the eigenvalues of the operator $T_w$.    To quantify the similarity of spectra, we consider a rearrangment distance similar to what is considered in \citet{koltchinksii-gine-kernel-operator}.  First, recall that the spectrum of a compact, symmetric operator is countable. Thus, we may view the spectrum as a sequence.  For finite-dimensional operators, we append an infinite number of zeros so that its spectrum is also an (infinite) sequence.  We say that two sequences $(x_i)_{i \in \mathbb{N}}$ and $(y_i)_{i \in \mathbb{N}}$  are equivalent if one is a re-arrangement of the other.  That is, $(x_i)_{i \in \mathbb{N}}$ and $(y_i)_{i \in \mathbb{N}}$ are equivalent  if and only if there exists some permutation $\sigma: \mathbb{N} \mapsto \mathbb{N}$ such that $ x_{\sigma(i)} = y_i$ for all $i \in \mathbb{N}$. 
 On this quotient space, consider the distance:
 \begin{align*}
 \delta_\infty(x,y) = \inf_{\sigma} \sup_{i} |x_i - y_{\sigma(i)}|.
 \end{align*}
\citet{koltchinksii-gine-kernel-operator} consider $\delta_2$, but here we consider the weaker norm $\delta_\infty$ since the tools that we use are not strong enough to bound eigenvalue perturbations under mild conditions for the $\delta_2$ norm.  In any case, we do not need to uniformly control all eigenvalues for our purposes, so the $\delta_\infty$ norm is more than suitable.  In what follows let $\lambda(\cdot)$ denote the spectrum associated with an operator, again appended with zeros if necessary.     

\begin{lemma}
\label{thm-eigenvalue-convergence}
Suppose that $0 < c \leq w(x,y) \leq C < \infty$ and $\rho_n = \omega(\log^{3/2} n / n)$. Then,
\begin{align*}
\delta_\infty(\lambda(\mathscr{A}), \lambda(T_w)) \stackrel{P}{\rightarrow} 0.
\end{align*}
\end{lemma}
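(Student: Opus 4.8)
The plan is to compare the spectrum of $\mathscr{A} = D^{-1/2} A D^{-1/2}$ to that of $T_w$ through an intermediate operator built from the true latent positions, and to control each comparison using the assumed two-sided bound $0 < c \le w \le C$. First I would introduce the ``population'' normalized kernel matrix $\mathscr{P}$, defined by $\mathscr{P}_{ij} = \rho_n w(\xi_i,\xi_j) / \sqrt{\mathfrak{D}_n(\xi_i) \mathfrak{D}_n(\xi_j)}$ for $i \ne j$ (and $0$ on the diagonal), where $\mathfrak{D}_n(\xi_i) = \sum_{k \ne i} \rho_n w(\xi_i,\xi_k)$ is the expected (unnormalized) degree. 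Conditional on $\xi_1,\dots,\xi_n$, the matrix $A$ has independent entries above the diagonal with $\mathbb{E} A_{ij} = \rho_n w(\xi_i,\xi_j) \wedge 1$, which for $n$ large equals $\rho_n w(\xi_i,\xi_j)$ on the high-probability event that $\rho_n C < 1$. The lower bound $w \ge c$ forces all degrees $D_i$ to concentrate around $\mathfrak{D}_n(\xi_i) = \Theta(n\rho_n)$ uniformly in $i$ (Bernstein plus a union bound, using $n\rho_n = \omega(\log^{3/2} n) \gg \log n$), so $D^{-1/2}$ is well defined and $\max_i |D_i / \mathfrak{D}_n(\xi_i) - 1| = o(1)$ with probability tending to one.

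The core analytic step is the operator-norm bound $\|\mathscr{A} - \mathscr{P}\| = o(1)$ with high probability. I would write $\mathscr{A} - \mathscr{P} = D^{-1/2}(A - \rho_n W_n) D^{-1/2} + (D^{-1/2} - \hat D^{-1/2}) \rho_n W_n \hat D^{-1/2} + \hat D^{-1/2} \rho_n W_n (\hat D^{-1/2} - D^{-1/2})$, where $(W_n)_{ij} = w(\xi_i,\xi_j)$ and $\hat D$ is the diagonal expected-degree matrix. The first term is handled by a concentration inequality for the spectral norm of a centered random matrix with independent bounded entries in the regime where the expected maximum degree is $\omega(\log n)$ (e.g.\ Bandeira--van Handel or the regularization-free bounds surveyed in \citet{le-concentration-random-graph}), which gives $\|D^{-1/2}(A-\rho_n W_n)D^{-1/2}\| = O_P(1/\sqrt{n\rho_n}) = o_P(1)$; the boundedness of $w$ keeps $\rho_n W_n \wedge 1 = \rho_n W_n$ eventually. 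The remaining two terms are $o_P(1)$ because $\|D^{-1/2} - \hat D^{-1/2}\|$ is small by the degree concentration above and $\|\rho_n W_n\| \le \rho_n \cdot nC = O(n\rho_n)$ while $\|\hat D^{-1/2}\| = O(1/\sqrt{n\rho_n})$, so their product is $O(1)$ times the relative degree error. By Weyl's inequality this yields $\delta_\infty(\lambda(\mathscr{A}), \lambda(\mathscr{P})) = o_P(1)$.

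It then remains to compare $\lambda(\mathscr{P})$ with $\lambda(T_w)$. The matrix $\mathscr{P}$ is, up to the diagonal correction and the replacement of $\mathfrak{D}_n(\xi_i)$ by $n\rho_n \mathfrak{D}(\xi_i)$, exactly the Galerkin/Nystr\"om discretization of $T_w$ at the i.i.d.\ sample points $\xi_1,\dots,\xi_n$: its nonzero eigenvalues coincide with those of the integral operator $T_w^{(n)} f(x) = \frac{1}{n}\sum_{j} \frac{w(x,\xi_j)}{\sqrt{\mathfrak{D}(x)\mathfrak{D}(\xi_j)}} f(\xi_j)$ acting on the empirical measure. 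Standard kernel-operator concentration --- essentially the argument of \citet{koltchinksii-gine-kernel-operator}, adapted to $\delta_\infty$ rather than $\delta_2$, which is only easier --- gives $\delta_\infty(\lambda(T_w^{(n)}), \lambda(T_w)) \to 0$ a.s., since the normalized kernel $\tilde w(x,y) = w(x,y)/\sqrt{\mathfrak{D}(x)\mathfrak{D}(y)}$ is bounded (again using $c \le w \le C$, so $c \le \mathfrak{D} \le C$) and hence square-integrable and defines a Hilbert--Schmidt operator. Absorbing the $o_P(1)$ diagonal removal (a rank-$n$ perturbation of operator norm $\max_i \mathscr{P}_{ii} = O(1/(n\rho_n))$, negligible in $\delta_\infty$) and the $\mathfrak{D}_n \to n\rho_n \mathfrak{D}$ replacement into the same framework, the triangle inequality $\delta_\infty(\lambda(\mathscr{A}),\lambda(T_w)) \le \delta_\infty(\lambda(\mathscr{A}),\lambda(\mathscr{P})) + \delta_\infty(\lambda(\mathscr{P}),\lambda(T_w))$ finishes the proof.

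The main obstacle is the operator-norm concentration of $\mathscr{A} - \mathscr{P}$ in the sparse regime $\rho_n = \omega(\log^{3/2} n/n)$: one must simultaneously control the fluctuation of the normalizing degrees $D_i$ and the spectral norm of the centered adjacency matrix, and na\"ive bounds degrade when the maximum degree is only polylogarithmic. The two-sided bound on $w$ is exactly what rescues this --- it makes every degree $\Theta(n\rho_n)$, eliminates the low-degree vertices that normally require graph regularization, and ensures $D^{-1/2}$ is uniformly comparable to $\hat D^{-1/2}$ --- but the bookkeeping in matching the discrete normalization to the continuous one, and in verifying that $\delta_\infty$ (rather than $\delta_2$) suffices for the eventual eigengap argument invoked in the proof of Theorem \ref{thm:mixing-time}, is where the care lies.
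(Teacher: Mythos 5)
Your proposal is correct and follows essentially the same route as the paper: a triangle inequality through a population-degree-normalized kernel matrix, with the random part $\|\mathscr{A}-\mathscr{P}\|$ controlled by spectral-norm concentration of the centered adjacency matrix plus uniform degree concentration (then Weyl), and the kernel-matrix-to-$T_w$ step handled by the Koltchinskii--Gin\'e result in $\delta_\infty$. The only differences are cosmetic: you invoke a sharper adjacency concentration bound (Bandeira--van Handel / Le et al.) where the paper uses Vu's $\sqrt{n\rho_n}\log^{3/4}n$ bound, and your intermediate matrix normalizes by the conditional expected degrees $\mathfrak{D}_n(\xi_i)$ rather than $n'\rho_n\mathfrak{D}(\xi_i)$, which just adds one routine replacement step.
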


\begin{proof}
Recall that $\mathfrak{D}(x) = \int_0^1 w(x,y) \ dy$ and let $\mathscr{D}$ be a diagonal matrix such that $\mathscr{D}_{ii} = \mathfrak{D}(\xi_i)$.  Moreover, let $\mathscr{P} = (\mathscr{D}^{-1/2}P\mathscr{D}^{-1/2}),$ where $P_{ij} = w(\xi_i, \xi_j)$ for $i\neq j$ and $P_{ii} = 0$.  Let $n' = n-1$ and consider the following decomposition:
\begin{align*}
\delta_\infty(\lambda(\mathscr{A}), \lambda(T_w))  \leq  \delta_\infty(\lambda(\mathscr{A}),\lambda(\mathscr{P})/n') + \delta_\infty(\lambda(\mathscr{P})/n',\lambda(T_w)).
\end{align*}
For the second term, we see that for $i \neq j$:
\begin{align*}
\mathscr{P}_{ij} = \frac{w(\xi_i, \xi_j)}{\sqrt{\mathfrak{D}(\xi_i) \mathfrak{D}(\xi_j})} = K(\xi_i,\xi_j) \text{, say. }
\end{align*}
Under the conditions $c \leq w(x,y) \leq C$, $EK^2(X,y) <\infty$; moreover, it is symmetric.  Therefore, convergence of this second term in probability to zero follows from Theorem 3.1 of \citet{koltchinksii-gine-kernel-operator}. 

Now for the first term, we may further upper bound with:
\begin{align*}
\delta_\infty(\lambda(\mathscr{A}), \lambda(\mathscr{P})/n') &\leq  \delta_\infty(\lambda(\mathscr{A}), \lambda(\mathscr{D}^{-1/2}A\mathscr{D}^{-1/2})/(n'\rho_n)) 
\\ & \ \ + 
\delta_\infty(\lambda(\mathscr{D}^{-1/2}A\mathscr{D}^{-1/2})/(n' \rho_n), \lambda(\mathscr{P})/n') 
\\ &= I + II \text{, say. }
\end{align*}
where $n' = n-1$. By Lemma \ref{lemma-perturb-degrees}, $I \stackrel{P}{\rightarrow} 0.$  Furthermore, by Lemma \ref{lemma-eigenvalue-bound}, $II \stackrel{P}{\rightarrow} 0.$  The result follows.
\end{proof}
In what follows, recall that $n' = n-1.$  Note that statements similar to Lemmas \ref{lemma-perturb-degrees} and \ref{lemma-eigenvalue-bound} are more well-known, but we provide proofs here for completeness. 
\begin{lemma}
\label{lemma-perturb-degrees}
Under the conditions of Theorem \ref{thm-eigenvalue-convergence}, 
\begin{align*}
\delta_\infty(\lambda(\mathscr{A}), \lambda(\mathscr{D}^{-1/2}A\mathscr{D}^{-1/2})/(n' \rho_n)) \stackrel{P}{\rightarrow} 0.
\end{align*}
\end{lemma}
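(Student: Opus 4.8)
\noindent\textit{Proof proposal for Lemma \ref{lemma-perturb-degrees}.} \ \ The plan is to reduce the $\delta_\infty$ bound to an operator-norm bound and then exploit that $\mathscr{A}$ and $M := (n'\rho_n)^{-1}\mathscr{D}^{-1/2}A\mathscr{D}^{-1/2}$ differ only by a diagonal similarity that is close to the identity. Since $\mathscr{A}$ and $M$ are real symmetric $n\times n$ matrices (padded by the same infinite string of zeros), matching their eigenvalues in decreasing order and invoking Weyl's inequality gives $\delta_\infty(\lambda(\mathscr{A}),\lambda(M)) \le \|\mathscr{A}-M\|_{\mathrm{op}}$, so it suffices to show $\|\mathscr{A}-M\|_{\mathrm{op}} = o_P(1)$. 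The key algebraic observation is that, on the event $\{D_i > 0 \ \forall\, i\}$, one has the exact factorization $\mathscr{A} = RMR$, where $R$ is the diagonal matrix with $R_{ii} = \sqrt{n'\rho_n\,\mathfrak{D}(\xi_i)/D_i}$; this is immediate from an entrywise computation, since the degree normalization in $\mathscr{A}$ and the $\mathscr{D}$ normalization in $M$ both cancel against $R$. Writing $RMR - M = (R-I)MR + M(R-I)$ then gives $\|\mathscr{A}-M\|_{\mathrm{op}} \le \|R-I\|_{\mathrm{op}}\,\|M\|_{\mathrm{op}}\,(1+\|R\|_{\mathrm{op}})$, so it remains to establish (i) $\|R-I\|_{\mathrm{op}} = o_P(1)$ and (ii) $\|M\|_{\mathrm{op}} = O_P(1)$.

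For (i), since $R$ is diagonal, $\|R-I\|_{\mathrm{op}} = \max_{i}|R_{ii}-1|$, and because $0<c\le \mathfrak{D}(x)\le C$ it is enough to show $\max_i |R_{ii}^{-2}-1| = \max_i |D_i/(n'\rho_n\mathfrak{D}(\xi_i)) - 1| = o_P(1)$. I would split this, after conditioning on $\xi_1,\dots,\xi_n$, into a mean term and a fluctuation term. For the mean, since $\rho_n\to 0$ and $w\le C$, eventually $\rho_n w(\xi_i,\xi_j)\wedge 1 = \rho_n w(\xi_i,\xi_j)$, so $E[D_i\mid\xi] = \rho_n\sum_{j\neq i}w(\xi_i,\xi_j)$, and $\tfrac{1}{n'}\sum_{j\neq i}w(\xi_i,\xi_j)$ concentrates around $\mathfrak{D}(\xi_i)$ at rate $O_P(\sqrt{\log n/n})$ by Hoeffding plus a union bound over $i$. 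For the fluctuation, $D_i - E[D_i\mid\xi]$ is a sum of $n-1$ independent centered Bernoulli variables with variance proxy $O(\rho_n)$, so Bernstein's inequality gives $P(|D_i - E[D_i\mid\xi]| > \epsilon\, n'\rho_n\,\mathfrak{D}(\xi_i) \mid \xi) \le 2\exp(-c'\epsilon^2 n\rho_n)$ for small $\epsilon$, and a union bound over the $n$ nodes closes the argument since $n\rho_n = \omega(\log^{3/2} n) = \omega(\log n)$; in particular $\min_i D_i \to \infty$ with probability tending to one, so $R$ (and $\mathscr{A}$) is well defined on the relevant event. For (ii), since $\mathscr{D}^{-1/2}$ has entries bounded by $c^{-1/2}$, $\|M\|_{\mathrm{op}} \le (c\,n'\rho_n)^{-1}\|A\|_{\mathrm{op}}$, and by standard concentration for the adjacency matrix of a sparse inhomogeneous random graph when $n\rho_n = \Omega(\log n)$ (e.g.\ \citet{le-concentration-random-graph}), $\|A\|_{\mathrm{op}} \le \|E[A\mid\xi]\|_{\mathrm{op}} + \|A - E[A\mid\xi]\|_{\mathrm{op}} \le C n\rho_n + O_P(\sqrt{n\rho_n}) = O_P(n\rho_n)$; alternatively one may borrow $\|M\|_{\mathrm{op}} = O_P(1)$ from Lemma \ref{lemma-eigenvalue-bound} together with $\|\mathscr{P}/n'\|_{\mathrm{op}} = O(1)$, which holds because $\mathscr{P}_{ij} = K(\xi_i,\xi_j)$ is bounded by $C/c$.

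Combining (i) and (ii): on the event where $\|R-I\|_{\mathrm{op}}\le\epsilon_n\to 0$ and $\|M\|_{\mathrm{op}}\le B$, we get $\|\mathscr{A}-M\|_{\mathrm{op}} \le \epsilon_n B (2+\epsilon_n) \to 0$, hence $\delta_\infty(\lambda(\mathscr{A}), \lambda(\mathscr{D}^{-1/2}A\mathscr{D}^{-1/2})/(n'\rho_n)) \stackrel{P}{\rightarrow} 0$, since $\lambda(M) = \lambda(\mathscr{D}^{-1/2}A\mathscr{D}^{-1/2})/(n'\rho_n)$. The main obstacle is step (i): obtaining uniform-over-all-$n$-nodes control of the degrees in the sparse regime, which is exactly where the lower sparsity bound $\rho_n = \omega(\log^{3/2} n /n)$ (only $\omega(\log n/n)$ is strictly needed here) and the two-sided graphon bound $c\le w\le C$ enter — the latter also keeps $\mathfrak{D}$, and hence $R$ and $\|\mathscr{D}^{-1/2}\|_{\mathrm{op}}$, uniformly bounded above and below. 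Everything else is bookkeeping with Weyl's inequality and submultiplicativity of the operator norm.
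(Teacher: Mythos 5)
Your argument is correct, and it reaches the conclusion by a genuinely different decomposition than the paper. The paper works additively: it writes the entrywise difference $\mathscr{A} - \mathscr{D}^{-1/2}A\mathscr{D}^{-1/2}/(n'\rho_n)$ explicitly, bounds its operator norm by the maximal row sum, and controls that row sum with a mean value theorem expansion together with Chernoff bounds giving $\max_i |D_i/(n'\rho_n) - \mathfrak{D}(\xi_i)| = o_P(1)$ and $c'n\rho_n \leq \min_i D_i \leq \max_i D_i \leq C'n\rho_n$ with high probability; Weyl's inequality then finishes. You instead use the exact multiplicative identity $\mathscr{A} = RMR$ with $R$ diagonal, $R_{ii} = \sqrt{n'\rho_n \mathfrak{D}(\xi_i)/D_i}$, and bound $\|RMR - M\|_{op} \leq \|R-I\|_{op}\|M\|_{op}(1+\|R\|_{op})$. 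The ingredient $\|R - I\|_{op} = o_P(1)$ is the same uniform degree concentration as in the paper (your Hoeffding-plus-Bernstein split conditional on all latent positions is equivalent to the paper's Binomial-given-$\xi_i$ Chernoff bound), but your route requires the extra input $\|M\|_{op} = O_P(1)$, which the paper's row-sum argument never needs; you supply it legitimately either via adjacency-matrix concentration ($n\rho_n \gtrsim \log n$ suffices, as in \citet{le-concentration-random-graph}) or by borrowing Lemma \ref{lemma-eigenvalue-bound}, whose proof is independent of this lemma, so there is no circularity. The trade-off is that your factorization is algebraically cleaner (no mean value theorem, no explicit error matrix), at the price of one additional spectral-norm fact, while the paper's argument is more self-contained, resting only on degree concentration and the elementary bound of the operator norm by maximal row sums. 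Your side remark that only $\rho_n = \omega(\log n / n)$ is needed for this particular lemma is also accurate; the stronger rate in the hypothesis is used elsewhere (e.g., in the Vu-type bound of Lemma \ref{lemma-eigenvalue-bound}).
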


\begin{proof}
Let $R$ be the matrix where:
\begin{align*}
R_{ij} &= \frac{A_{ij}\left(n'\rho_n \sqrt{\mathfrak{D}(\xi_i)\mathfrak{D}(\xi_j)} - \sqrt{D_i D_j}\right) }{n'\rho_n \sqrt{ \mathfrak{D}(\xi_i)\mathfrak{D}(\xi_j) D_i D_j}}.
\end{align*}
It follows that, uniformly in $1 \leq i \leq n,$
 \begin{align*}
|\lambda_i(\mathscr{A})- \lambda_i(\mathscr{D}^{-1/2}A\mathscr{D}^{-1/2})/(n\rho_n)| & \stackrel{(i)}{\leq} \|R \|_{op} 
\\ & \stackrel{(ii)}{\leq} \max_{1 \leq i \leq n} \sum_{j=1}^n |R_{ij}|. 
\end{align*}
where $(i)$ follows from Weyl's inequality and $(ii)$ follows from the fact for any matrix $m \times n$ $B$, $\|B\|_{op}^2 \leq \| B\|_1 \|B\|_\infty$ where $\|B\|_1$ and $\|B\|_\infty$ are defined as:
\begin{align*}
\|B\|_1 &= \sup_{v \neq 0} \frac{\|Bv\|_1} {\|v\|_1} = \max_{1 \leq j \leq n} \sum_{i=1}^m |B_{ij}|
\\ \|B\|_\infty &= \sup_{v \neq 0} \frac{\|Bv\|_\infty}{\|v\|_\infty} =  \max_{1 \leq i \leq m} \sum_{j=1}^n |B_{ij}|.
\end{align*}
Note that if $B$ is symmetric, $\|B\|_\infty = \|B\|_1.$ Now, by mean value theorem and $\mathfrak{D}(\xi) > c$, for $ \eta_i, \eta_j$ between $D_i/(n'\rho_n)$ and $\mathfrak{D}(\xi_i)$, $D_j/(n'\rho_n)$ and $\mathfrak{D}(\xi_j)$, respectively:  
\begin{align*}
\ & \ \max_{1 \leq i \leq n} \sum_{j=1}^n |R_{ij}|
\\ \leq \ &  \frac{\max_{i} D_i}{ c\min_i D_i} \max_{i,j} \left|\frac{\eta_i + \eta_j}{\sqrt{\eta_i \eta_j}} \right| \times   \max_{i} \left| \frac{D_i}{n'\rho_n} - \mathfrak{D}(\xi_i) \right| 
\\  = \ &  I \times II \times III, \text{ say.}
\end{align*}
Now, observe that $D_i \ | \ \xi_i \sim \mathrm{Binomial}(n', \rho_n \mathfrak{D}(\xi_i))$.  Thus, by a Chernoff bound, it follows that $\max_i | D_i - n'\rho_n\mathfrak{D}(\xi_i)| \leq K \sqrt{n \rho_n \log n}$ with probability tending to 1 for some $0 < K < \infty$ since if we choose $\epsilon_n = K \sqrt{ n \rho_n \log n}$, we have: 
\begin{align*}
& P\left( \max_{i} D_i - n'\rho_n \mathfrak{D}(\xi_i) > \epsilon_n \right)
\\ \leq \ & n \  \mathbb{E}\left[P\left( D_i - n'\rho_n \mathfrak{D}(\xi_i) > \epsilon \ | \ \xi_i\right)\right]
\\ \leq \ &  n \exp\left(- \frac{\epsilon_n^2}{3Cn'\rho_n} \right) = o(1).  
\end{align*}
Since the same Chernoff bound holds for the lower tail, we have for some $0 <c'\leq C' < \infty$, with probability tending to 1:
\begin{equation}
\label{eq-min-max-degree}
\begin{split}
\max_i D_i &\leq Cn'\rho_n + K\sqrt{n \rho_n \log n} \leq C'n \rho_n 
\\ \min_i D_i &\geq cn'\rho_n - K\sqrt{n \rho_n \log n} \geq c'n \rho_n.
\end{split}
\end{equation}
Thus, it can readily be seen that $I = O_P(1)$, $II = O_P(1)$, and $III = o_P(1).$  The result follows. 
\end{proof}

\begin{lemma}
\label{lemma-eigenvalue-bound}
Under the conditions of Theorem \ref{thm-eigenvalue-convergence}, 
\begin{align*}
\delta_\infty(\lambda(\mathscr{D}^{-1/2}A\mathscr{D}^{-1/2})/(n'\rho_n)- \lambda(\mathscr{P})/n') \stackrel{P}{\rightarrow} 0. 
\end{align*}
\end{lemma}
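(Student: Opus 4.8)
The plan is to control $\delta_\infty$ by an operator-norm bound and then reduce everything to the concentration of the centered adjacency matrix. Since $\tfrac{1}{n'\rho_n}\mathscr{D}^{-1/2}A\mathscr{D}^{-1/2}$ and $\tfrac{1}{n'}\mathscr{P}$ are symmetric $n\times n$ matrices, the decreasing rearrangement of their (zero-padded) spectra is a valid matching for $\delta_\infty$, so Weyl's inequality gives
\begin{align*}
\delta_\infty\!\left(\lambda(\mathscr{D}^{-1/2}A\mathscr{D}^{-1/2})/(n'\rho_n),\ \lambda(\mathscr{P})/n'\right)
&\leq \left\| \tfrac{1}{n'\rho_n}\mathscr{D}^{-1/2}A\mathscr{D}^{-1/2} - \tfrac{1}{n'}\mathscr{D}^{-1/2}P\mathscr{D}^{-1/2} \right\|_{op}
\\ &= \tfrac{1}{n'\rho_n}\left\| \mathscr{D}^{-1/2}(A - \rho_n P)\mathscr{D}^{-1/2} \right\|_{op}
\\ &\leq \tfrac{1}{c\,n'\rho_n}\left\| A - \rho_n P \right\|_{op},
\end{align*}
where the last step uses that $\mathscr{D}$ is diagonal with entries $\mathfrak{D}(\xi_i) \in [c,C]$, so $\|\mathscr{D}^{-1/2}\|_{op} \leq c^{-1/2}$. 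Hence it suffices to show $\tfrac{1}{n'\rho_n}\|A - \rho_n P\|_{op} \stackrel{P}{\rightarrow} 0$.

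Next I would identify $\rho_n P$ with the conditional mean of $A$. Since $w \leq C$ and $\rho_n \rightarrow 0$, for all $n$ large enough one has $\rho_n w(\xi_i,\xi_j) \leq \rho_n C < 1$, so the truncation $\wedge 1$ in (\ref{eq:sparse-graphon-model}) is inactive and $\mathbb{E}[A_{ij}\mid \xi] = \rho_n w(\xi_i,\xi_j) = \rho_n P_{ij}$ for $i \neq j$, while $A_{ii} = P_{ii} = 0$; thus $A - \rho_n P = A - \mathbb{E}[A\mid\xi]$. Conditionally on $\xi$, this is a symmetric matrix with independent, mean-zero, $[-1,1]$-valued entries above the diagonal, each of variance at most $\rho_n C$. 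I would then invoke a standard spectral concentration bound for such matrices: there are constants $K' < \infty$ and $n_0$, not depending on $\xi$, such that for $n \geq n_0$, conditionally on $\xi$, $\|A - \mathbb{E}[A\mid\xi]\|_{op} \leq K'\sqrt{n\rho_n}$ with probability $1-o(1)$, uniformly in $\xi$ (this follows, for instance, from the matrix Bernstein inequality together with $n\rho_n = \omega(\log^{3/2} n)$, or from the concentration results surveyed in \citet{le-concentration-random-graph}; the regime $\rho_n = \omega(\log^{3/2} n/n)$ sits comfortably inside the range where the unregularized adjacency matrix concentrates in operator norm). Averaging over $\xi$ the bound holds unconditionally, so $\tfrac{1}{c\,n'\rho_n}\|A - \rho_n P\|_{op} \leq \tfrac{K'}{c}\,\tfrac{\sqrt{n'\rho_n}}{n'\rho_n} = \tfrac{K'}{c\sqrt{n'\rho_n}} \rightarrow 0$, since $n'\rho_n = \omega(\log^{3/2} n) \rightarrow \infty$, which is the claim.

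The one genuinely delicate point is the spectral concentration step: the bare (unregularized) adjacency matrix concentrates in operator norm only once the expected degree is at least of order $\log n$, which is exactly why a lower bound on $\rho_n$ enters here (in fact $\rho_n = \omega(\log n/n)$ would already suffice for this lemma); in sparser regimes one would have to replace $A$ by a degree-regularized version. The remaining ingredients --- Weyl's inequality, the elementary bound $\|\mathscr{D}^{-1/2}\|_{op} \leq c^{-1/2}$, and the identification $\mathbb{E}[A\mid\xi] = \rho_n P$ for large $n$ --- are routine.
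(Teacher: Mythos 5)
Your proposal is correct and follows essentially the same route as the paper: reduce $\delta_\infty$ to an operator-norm bound via Weyl's inequality, then control the centered random matrix conditionally on $\xi$ using the bounds $c \leq w \leq C$. The only cosmetic differences are that you pull the $\mathscr{D}^{-1/2}$ factors out by submultiplicativity and invoke matrix Bernstein or adjacency-concentration results for $A - \rho_n P$, whereas the paper applies Vu's spectral-norm theorem directly to $\mathscr{D}^{-1/2}A\mathscr{D}^{-1/2} - \rho_n\mathscr{P}$ (the source of its extra $\log^{3/4} n$ factor and hence the $\omega(\log^{3/2} n / n)$ sparsity condition, consistent with your remark that $\omega(\log n / n)$ would suffice for this lemma).
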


\begin{proof}
We begin by deriving a bound for $\|H\|_{op},$ where:
\begin{align*}
H = \mathscr{D}^{-1/2}A\mathscr{D}^{-1/2} - \rho_n\mathscr{P}.
\end{align*}
It is clear that conditional on $\boldsymbol{\xi}_n = (\xi_1, \ldots, \xi_n)$, $H$ is centered with independent entries.  Moreover, observe that:
\begin{align*}
|H_{ij}| \leq \frac{1}{c}, \ \ \sigma^2 \leq \frac{C}{c^2}\rho_n 
\end{align*}
Thus, invoking Theorem 1.4 of \citet{Vu-Spectral-Norm}, we have for some $0 < K < \infty$,
\begin{align*}
\|H\|_{op} \leq K \sqrt{n \rho_n } \log^{3/4} n.
\end{align*}
Now, Weyl's inequality implies that, uniformly in $i$:
\begin{align*}
\{\lambda_i(\mathscr{D}^{-1/2}A\mathscr{D}^{-1/2})/(n'\rho_n)- \lambda_i(\mathscr{P})/n'\} \leq \sqrt{\frac{K'\log^{3/2} n}{n \rho_n}}.
\end{align*}
The result follows. 

\end{proof}
\begin{lemma}
\label{lemma-max-stationary-measure}
Suppose that $\rho_n = \omega(\log n/n)$ and $0 < c \leq w(u,v) \leq C < \infty$. Then, there exists $0<M<\infty$ such that:
\begin{align*}
P\left( \max_{1 \leq i \leq n} \frac{1}{n\pi(i)} \leq M \right) \rightarrow 1.
\end{align*}
\begin{proof}
Recall that the stationary measure is given by:
\begin{align*}
\pi(i) = \frac{D_i}{2|E|} = \frac{D_i}{n\rho_n(n-1)} \times \left( \frac{1}{ {n \choose 2} \rho_n } \sum_{i < j} A_{ij} \right)^{-1} = I \times II, \text{ say.}
\end{align*}
By Theorem 1 of \citet{Bickel-Chen-Levina-method-of-moments}, $II \stackrel{P}{\rightarrow} 1$.  Moreover, from (\ref{eq-min-max-degree}), it is clear that there exists some $k >0$ such that, with probability tending to 1:
\begin{align*}
\min_{1 \leq i \leq n} \frac{D_i}{n'\rho_n} \geq k.
\end{align*}
The result follows.
\end{proof}

\end{lemma}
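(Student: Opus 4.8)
The plan is to reduce the claim to two elementary concentration facts for the sparse graphon model. Since the simple random walk on the (undirected) graph has stationary distribution $\pi(i) = D_i / 2|E|$ with $2|E| = \sum_{j=1}^n D_j$, I would first write
\[
\max_{1\le i \le n} \frac{1}{n\pi(i)} \;=\; \max_{1 \le i \le n}\frac{2|E|}{nD_i} \;=\; \frac{2|E|}{n(n-1)\rho_n}\cdot\left(\min_{1\le i\le n}\frac{D_i}{(n-1)\rho_n}\right)^{-1},
\]
so that it suffices to show the edge-density factor $\frac{2|E|}{n(n-1)\rho_n}$ is $O_P(1)$ and that $\min_{1\le i \le n} D_i/((n-1)\rho_n)$ is bounded below by a positive constant with probability tending to one.

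For the first factor, note that $\frac{2|E|}{n(n-1)\rho_n} = \binom{n}{2}^{-1}\rho_n^{-1}\sum_{i<j}A_{ij}$. I would argue this converges in probability to $\int_0^1\int_0^1 w(u,v)\,du\,dv = 1$: this is Theorem~1 of \citet{Bickel-Chen-Levina-method-of-moments}, but it can also be checked directly by computing the conditional mean $\binom{n}{2}^{-1}\sum_{i<j} w(\xi_i,\xi_j)$, applying the strong law of large numbers for U-statistics in the $\xi$'s, and bounding the conditional variance by $O(1/(n^2\rho_n)) = o(1)$ and invoking Chebyshev. Hence this factor equals $1 + o_P(1)$.

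For the second factor, I would invoke the lower-tail estimate already established in display~(\ref{eq-min-max-degree}). The key point is that, conditional on $\xi_i$ (and using $\rho_n\to 0$ so the truncation $\wedge\,1$ is eventually inactive), $D_i \sim \mathrm{Binomial}(n-1, \rho_n\mathfrak{D}(\xi_i))$ with $\mathfrak{D}(\xi_i) = \int_0^1 w(\xi_i,y)\,dy \ge c$; a multiplicative Chernoff bound gives $P(D_i \le \tfrac{c}{2}(n-1)\rho_n) \le \exp(-\tfrac{c}{8}(n-1)\rho_n)$ after integrating over $\xi_i$, and a union bound over the $n$ nodes together with $\rho_n = \omega(\log n/n)$ forces $\min_{1\le i\le n} D_i \ge \tfrac{c}{2}(n-1)\rho_n$ on an event of probability $\to 1$. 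Combining the two displays, on an event of probability tending to one $\max_{1\le i\le n}\frac{1}{n\pi(i)} \le (1+o(1))\cdot\frac{2}{c} \le \frac{3}{c}$ for $n$ large, so the claim holds with, say, $M = 3/c$.

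There is no substantial obstacle in this argument; the only point requiring a little care is that the Chernoff-plus-union-bound step for the minimum degree must be run at the sparsity threshold $\rho_n = \omega(\log n/n)$ stated in the lemma, rather than the stronger $\rho_n = \omega(\log^{3/2}n/n)$ used in Lemmas~\ref{lemma-perturb-degrees}--\ref{lemma-eigenvalue-bound}. This is fine, because the $\log^{3/2}$ factor there enters only through the operator-norm bound of \citet{Vu-Spectral-Norm} and is irrelevant to controlling the degrees, whose deviations are governed by an ordinary Bernstein/Chernoff bound of order $\sqrt{n\rho_n\log n}$.
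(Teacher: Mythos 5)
Your proof is correct and follows essentially the same route as the paper: the same factorization of $1/(n\pi(i))$ into the normalized edge count (handled via Theorem 1 of \citet{Bickel-Chen-Levina-method-of-moments}, or an equivalent direct U-statistic/Chebyshev argument) times the inverse of the normalized minimum degree (handled via the Chernoff-plus-union-bound estimate underlying display (\ref{eq-min-max-degree})). Your observation that the degree concentration only requires $\rho_n = \omega(\log n/n)$, with the $\log^{3/2}n$ factor entering elsewhere only through the spectral-norm bound, is accurate and matches the lemma's stated hypothesis.
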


\noindent \textit{Proof of Theorem \ref{thm:mixing-time}}. \ \ 
Recall that:
\begin{align*}
\|\widetilde{P}_n^t(\ \cdot \ | \ \mathcal{X}_0 = i) - \pi(\cdot) \|_{TV} \leq \frac{ \gamma(\mathscr{A})^t}{\sqrt{\pi(i)}}.
\end{align*}
By Lemma \ref{lemma-max-stationary-measure}, it must be the case that, for some $0 < M < \infty$, with probability tending to $1$:
\begin{align*}
\max_{1 \leq i \leq n} \frac{1}{\sqrt{\pi(i)}} \leq \sqrt{Mn}.
\end{align*}

Now by Lemma \ref{thm-eigenvalue-convergence}, it must be that case that $\lambda_{2}(\mathscr{A}) \stackrel{P}{\rightarrow} \lambda_{2}(T_w)$ and $\lambda_{-1}(\mathscr{A}) \stackrel{P}{\rightarrow} \lambda_{-1}(T_w)$. Furthermore, $\lambda_1(T_w) =1$ since $\lambda_1(\mathscr{A}) =1$ for all $n$.  Moreover, recall the Krein-Rutman theorem, stated below (see for example, \citet{chang-positive-operators-pdf})

\begin{proposition}[Krein-Rutman theorem, strong version] 
\label{proposition-krein-rutman}
Let $X$ be a Banach space, $P \subset X$ be a total cone and $T \in L(X)$ be compact, strongly positive with its spectral radius $r(T) > 0$. Then,
\begin{enumerate}
\item[(a)] $r(T) > 0$ is a geometrically simple eigenvalue of $T$ pertaining to an eigenvector $x_0 \in \text{int}(P).$
\item[(b)] For all $ \lambda \neq r(T)$, we have $|\lambda| < r(T)$.
\item[(c)] Any real eigenvalue of $T$ with positive eigenvector must be $r(T)$.
\end{enumerate}
\end{proposition}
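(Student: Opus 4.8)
The plan is to follow the classical route from the weak to the strong Krein--Rutman theorem, using strong positivity as the engine that upgrades membership in $P$ to membership in $\operatorname{int}(P)$. Throughout I write $r = r(T)$ and fix a reference point $u_0 \in \operatorname{int}(P)$, which exists since strong positivity presupposes $\operatorname{int}(P) \neq \emptyset$. First I would establish existence of a positive eigenvector for $r$. For $\mu > r$ the Neumann series $(\mu I - T)^{-1} = \sum_{k \geq 0} \mu^{-(k+1)} T^k$ converges to a positive operator, and for a positive operator the spectral radius is a singular point of the resolvent, so $\|(\mu I - T)^{-1}\| \to \infty$ as $\mu \downarrow r$; since $u_0$ is an order unit, $\|(\mu I - T)^{-1} u_0\| \to \infty$ as well. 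Normalizing $y_\mu = (\mu I - T)^{-1} u_0 / \|(\mu I - T)^{-1} u_0\| \in P$ gives $(\mu I - T) y_\mu = c_\mu u_0$ with $c_\mu \to 0$, so $T y_\mu = \mu y_\mu - c_\mu u_0$. Along a sequence $\mu \downarrow r$, compactness of $T$ yields a convergent subsequence of $(T y_\mu)$, whose limit $y$ satisfies $\|y\| = 1$, $y \in P$ (closed cone), and $T y = r y$.

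Strong positivity then promotes this: since $y \in P \setminus \{0\}$, we have $r y = T y \in \operatorname{int}(P)$, and dividing by $r > 0$ gives $y \in \operatorname{int}(P)$, which is the interior statement in (a). For geometric simplicity I would use a maximal-scalar squeezing argument. Writing the interior eigenvector as $u$, let $v$ be any real eigenvector for $r$ and set $s^\ast = \sup\{s : u - s v \in P\}$, which is finite and attained because $u \in \operatorname{int}(P)$ and the cone contains no line. The contact vector $w = u - s^\ast v$ lies in $\partial P$ and is again an eigenvector for $r$; if $w \neq 0$, strong positivity forces $w = r^{-1} T w \in \operatorname{int}(P)$, contradicting $w \in \partial P$. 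Hence $w = 0$, so $v \in \operatorname{span}(u)$, and passing to real and imaginary parts handles complex eigenvectors, giving a one-dimensional eigenspace.

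Part (c) I would obtain through the adjoint. The operator $T^\ast$ is compact and positive for the dual cone $P^\ast$ with $r(T^\ast) = r$, so the weak theorem supplies a functional $\psi \in P^\ast \setminus \{0\}$ with $T^\ast \psi = r \psi$, and such a $\psi$ is strictly positive on $\operatorname{int}(P)$. If $\mu$ is real with positive eigenvector $v$, then $v \in \operatorname{int}(P)$ by strong positivity, and evaluating $\langle \psi, T v\rangle$ in two ways gives $\mu \langle \psi, v\rangle = r \langle \psi, v\rangle$ with $\langle \psi, v\rangle > 0$, forcing $\mu = r$.

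The main obstacle is part (b): ruling out any other spectral value on the circle $|\lambda| = r$. Since $T$ is compact it suffices to treat nonzero eigenvalues, so suppose $T v = \lambda v$ with $\lambda = r e^{i\theta}$ and $\theta \not\equiv 0 \pmod{2\pi}$. Pairing against $\psi$ gives $(1 - e^{i\theta})\langle \psi, v\rangle = 0$, hence $\langle \psi, v\rangle = 0$. I would then run a two-sided contact argument: let $s_0$ be the least scalar with $s_0 u \pm \operatorname{Re}(e^{i\phi} v) \in P$ for every phase $\phi$, attained at some boundary contact $w = s_0 u - \operatorname{Re}(e^{i\phi_0} v) \in \partial P$, and use the identity $T w = r\big(s_0 u - \operatorname{Re}(e^{i(\phi_0 + \theta)} v)\big)$ together with strong positivity. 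The delicate point is that a single application only shows the contact phase becomes strictly interior after one rotation by $\theta$; to close the argument one must track the orbit of phases, reducing the rational case $\theta \in 2\pi\mathbb{Q}$ to geometric simplicity of a suitable power $T^q$ (again compact and strongly positive, with leading eigenvalue $r^q$), while the irrational case requires a quantitative strict-positivity estimate, or the recurrence of the rotation, to force $w = 0$ and hence $v \in \operatorname{span}(u)$, contradicting $\lambda \neq r$. This peripheral-spectrum step is where essentially all the difficulty concentrates; everything else reduces to positivity bookkeeping and the compactness-plus-resolvent existence scheme.
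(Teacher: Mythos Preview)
The paper does not prove this proposition at all: it is quoted verbatim as a known result with a citation (``see for example, \citet{chang-positive-operators-pdf}'') and then applied to the integral operator $T_w$ to extract the eigengap. So there is no ``paper's own proof'' to compare against; you have supplied a from-scratch argument where the paper simply invokes the literature.

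That said, your sketch follows the classical route and parts (a) and (c) are fine. Your treatment of (b), however, is more tangled than it needs to be, and the irrational case is left genuinely unresolved. The rational/irrational dichotomy is unnecessary. With your notation, set $f(\phi) = s_0 u - \operatorname{Re}(e^{i\phi} v)$, so $f(\phi) \in P$ for all $\phi$ and $f(\phi_0) \in \partial P$ at the contact phase. The identity $T f(\phi) = r\, f(\phi + \theta)$ applied at $\phi = \phi_0 - \theta$ (not at $\phi_0$) gives the contradiction in one step: either $f(\phi_0 - \theta) = 0$, in which case $\operatorname{Re}(e^{i(\phi_0-\theta)} v) = s_0 u$ and a short computation forces $v \in \operatorname{span}_{\mathbb{C}}(u)$, contradicting $\lambda \neq r$; or $f(\phi_0 - \theta) \neq 0$, in which case strong positivity gives $f(\phi_0) = r^{-1} T f(\phi_0 - \theta) \in \operatorname{int}(P)$, contradicting $f(\phi_0) \in \partial P$. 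No orbit tracking, no powers of $T$, no recurrence of irrational rotations.
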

Under the conditions $c \leq w(x,y) \leq C,$ $T_w$ is a strongly positive, compact operator.  Moreover, for any normal operator (which includes self-adjoint operators), the only possible accumulation point of the spectrum is $0$ (see for example, Theorem 12.30 of \citet{rudin-functional-analysis}).  Thus, there must exist a gap $\zeta = 1- \max(\lambda_{2}(T_w), |\lambda_{-1}(T_w)|) > 0$.  

Letting $\gamma = 1 - \zeta/2$, we have that for any $\delta >0$, there exists some $N_\delta$ such that for all $n > N_\delta$:
\begin{align*}
& \ 1- P\left( \forall \ t \geq 1, \ \ \|\widetilde{P}_n^t(\ \cdot \ | \ \mathcal{X}_0 = i) - \pi(\cdot) \|_{TV} \leq \sqrt{Mn} \gamma^t   \right) 
\\ \leq & \ P\left( \max_{i \in \{-1, 2\}}\left|\lambda_i(\mathscr{A}) -\lambda_i(T_w)\right| > \zeta/2 \right) + P\left(\max_{1 \leq i \leq n} \frac{1}{n\pi(i)} > M \right)
\\  \leq & \ \delta.
\end{align*}
The result follows.
\qed
\end{appendix}
\newpage

\bibliographystyle{apalike} 
\bibliography{mybib.bib}

\end{document}